\theoremstyle{definition}
\newtheorem{definition}{Definition}[section]
\newtheorem{theorem}[definition]{Theorem}
\newtheorem{lemma}[definition]{Lemma}
\newtheorem{proposition}[definition]{Proposition}
\newtheorem{remark}[definition]{Remark}
\theoremstyle{definition}
\newtheorem{example}[definition]{Example}
\title{Einstein-Hilbert Action and on the Gauss-Bonnet Theorem for Riemannian Noncommutative Tori}
\author{Javad Golipour\\
\\
Institute for Research in Fundamental Sciences (IPM), Tehran \footnote{Email: javadgolipour@ipm.ir}
}
\date{\vspace{-5ex}}
\begin{document}
\maketitle

%Instruction of colors:
%\textcolor{Blue}{$\blacktriangleright$} It used for new items.
%\textcolor{Purple}{$\blacktriangleright$} It used for edited items.
%textcolor{Green}{$\blacktriangleright$} It used for remarkable comments.
%textcolor{Red}{$\blacktriangleright$} It used for questions.

%\begin{abstract}
%In this note we want to review the work of Babson and Kozlov who solved the conjecture of Lovasz about the lower bound for chromatic number of graphs.
%\end{abstract}
%...................................................................%
%...................................................................%

\tableofcontents 
\begin{abstract}
	We show the non-positivity of the Einstein-Hilbert action for conformal flat Riemannian metrics. The action vanishes only when the metric is constant flat. This recovers an earlier result of Fathizadeh-Khalkhali in the setting of spectral triples on noncommutative four-torus. Furthermore, computations of the gradient flow and the scalar curvature of this space based on modular operator are given. We also show the Gauss-Bonnet theorem for a parametrized class of non-diagonal metrics on noncommutative two-torus.
\end{abstract}
\section{Introduction}
%spectral invariants by pseudo calculus%%...................................................%
The Gauss-Bonnet theorem on noncommutative torus was proved in the paper of A. Connes and P. Tretkoff \cite{Con3}. The main idea was using pseudodifferential calculus \cite{Gil1} and the idea of uniformization theorem which assumed a correspondence between the the complex structures and conformal classes on noncommutative two-torus. Following that, some other results about geometric invariants in noncommutative setting were published, \cite{Mar1, Con2,Dab1, Mas2, Mas3, Sit1}. 
\\
%approach of connection theory%%...................................................%
To construct a suitable setting for expressing the concepts of differential geometry in noncommutative framework, one needs to define a reliable connection theory and a Riemannian metric. The concept of compatible connection for a $C^{*}-$dynamical system $(A,G, \alpha)$ and in the special case for a $C^{*}-$dynamical system on a noncommutative torus was introduced by Connes \cite{Con1}. Following that work, it has been proved in \cite{Ros1} a Levi-Civita's theorem for irrational rotation algebras restricted to a dense class given by generic matrices of rotations. Besides, due to lack of derivation property on module action of $A_{\theta}^{\infty}$ on canonical derivations, two different spaces used in \cite{Ros1} to represent the vector fields; the space of derivations on $A_{\theta}^{\infty}$ as the Lie algebra of the space $Aut(A_{\theta}^{\infty})$ of $*-$automorphism group of $A_{\theta}^{\infty}$, and the space of free $A_{\theta}^{\infty}-$module generated by canonical derivations which admits the module action, but it is not a Lie algebra.
\\
There are a lot of results according to the spectral triples or connection theory close to the subject of the paper that consider mathematical physical subjects specially in quantum gravity such \cite{Maj1, Con5, Mas3, Sit2}.
\\
The aim of this paper is understanding more about the connection theory and Riemannian geometry of noncommutative torus. In Section \ref{SecEH}, we show the non-positivity of the Einstein-Hilbert action and that its extremum case occurs for a constant flat metric. The same result is proved in \cite{Mas3} in the setting of spectral triples. In Section \ref{SecGB}, we prove the Gauss-Bonnet theorem for a specific case by complex residue and for a parametrized class of non-diagonal metrics on noncommutative torus. This is the first time in noncommutative geometry that non-diagonal metrics have been considered. In Section \ref{SecGBFails} it is shown that the Gauss-Bonnet theorem in the noncommmutative setting violates in general. 
\\
%applications in mathematical physics%%...................................................%

%...................................................................%
%...................................................................%

\section{Riemannian metrics and Levi-Civita connection}

In this section we recall from \cite{Ros1} a Levi-Civita's theorem for a class of  noncommutative tori.  

Let $\Theta= (\theta_{ij})$ be a real  skew symmetric $n \times n$ matrix.  The noncommutative $n-$torus $A_{\Theta}$ is the universal unital  $C^{*}-$algebra generated by $n$ unitaries  $U_{1},...,U_{n}$ wich satisfy the commutation  relations 
$$U_{j}U_{k} = e^{2 \pi i \theta_{jk}} U_{k}U_{j}.$$ 
For a multi index $k =(k_1, \dots, k_n) \in \mathbb{Z}^n$, let $U^k= U_1^{k_1} \dots U_n^{k_n}$. There is an action of   $\mathbb{R}^n$ on the $C^*$-algebra $A_{\Theta}$ defined by 
$$ \alpha_s(U^k)=e^{i \langle s, k\rangle}U^k,\quad  s\in \mathbb{R}^n.$$
Let $A_{\Theta}^{\infty}$ denote the set of smooth vectors for this action. It is an involutive  dense subalgebra of $A_{\Theta}$. Alternatively it can be described as 
 $$ A_{\Theta}^{\infty} =\left\{\sum_{k \in \mathbb{Z}^n} a_{k}U^k|  \, (a_{k}) \text{ is rapidly decreasing}\right\}.$$
The infinitesimal generators of the action $\alpha$ are the unbounded  $*$-derivations $\delta_j,  j=1, \dots,n$ which satisfy 
$$ \delta_j (U_k)= 2 \pi i \delta_{jk} U_k.$$

Let  $\mathcal{X}_{\Theta}$  denote the the free rank$-n$ left $A_{\Theta}^{\infty}-$module generated by the canonical derivations $\partial_{1},...,\partial_{n}$. Let $\mathcal{D}_{\Theta}$
denote the space of $*$-derivations $\delta: A_{\Theta}^{\infty} \to A_{\Theta}^{\infty}.$ It is real a Lie algebra.   
It is known that for almost all skew-adjoint matrices $\Theta$,  any derivation $\delta \in  \mathcal{D}_{\Theta}$ can be   uniquely  decomposed as  $a_1 \partial_1 +\dots +a_n \partial_n + \delta_0$ where $\delta_0$ is an inner derivation. 

\begin{definition} \label{Riemannianmetric}
A Riemannian metric on $A_{\Theta}^{\infty}$ is a sesquilinear  map $ g =\langle \cdot , \cdot \rangle : \chi_{\Theta} \times \chi_{\Theta} \longrightarrow  A_{\Theta}^{\infty}$ such that the following axioms of  a pre-Hilbert module holds. That is for all  $X, Y \in \mathcal{X}_{\Theta},$ and $a \in A_{\Theta}^{\infty}$ we have\\
(1)  $\langle X, Y\rangle = \langle Y,X\rangle^{*}$  (Hermitian symmetry),\\
(2) $\langle aX,Y\rangle = a \langle X, Y\rangle,$ \\
 (3) $\langle X, X\rangle \geq 0$, and $\langle X, X\rangle =0 $ only if $X=0$, \\
 (4) $\langle \partial_{i}, \partial_{j}\rangle= \langle \partial_{j},\partial_{i}\rangle$ for all $i, j,$ (reality condition). 
\end{definition}

\begin{definition} \label{connection1} 
A connection is a map $\nabla : \mathcal{D}_{\Theta} \times \mathcal{X}_{\Theta} \longrightarrow \mathcal{X}_{\Theta}$ with the properties:
\\$(1)$ $\bigtriangledown$ is $\mathbb{C}$-linear relative to the first variable (this is because $D_{\Theta}$ in contrast to the classical case is no longer $A_{\Theta}^{\infty}$-module). 
\\$(2)$ $\bigtriangledown$ satisfies in the Leibniz rule relative to the second variable. 
\\$(3)$ We choose among other possibilities, $\bigtriangledown_{ada} = a.$, left multiplication by $a$ for each $a \in  A_{\Theta}^{\infty}$ with $\tau(a)=0$. Cf. Remark \ref{uniquenessremark}. 
\\$(4)$ $<\bigtriangledown_{\partial_{i}} \partial_{j}, \partial_{k}>$ is self-adjoint, since we want the covariant derivative of a real vector in a real direction be real-valued.
\end{definition}

A connection $\bigtriangledown$ is called compatible with the Riemannian metric $g$ if for 
all $X, Y \in \mathcal{X}_{\Theta}$ and $Z\in \mathcal{D}_{\Theta}$
$$Z \cdot \langle X, Y\rangle =  \langle  \bigtriangledown_Z  X, Y\rangle +  \langle X, \bigtriangledown_Z Y \rangle$$
and is called torsion free if for any two partial derivatives, 
$ \bigtriangledown_{\partial_{i}}  \partial_{j} = \bigtriangledown_{\partial_{j}}  \partial_{i}$.

%...........................................................%
%...........................................................%

\begin{theorem} \label{Levi-Civita} ( Levi-Civita \cite{Ros1}).
For a generic $\Theta$ and a  Riemannian metric $g$,  there is a unique torsion free compatible connection $\bigtriangledown$.  This is called Levi-Civita connection which is determined, due to the nondegeneracy of the metric, by the formula
\begin{equation}\label{Cristoffell}
<\bigtriangledown_{\partial_{i}} \partial_{j}, \partial_{k}> = \frac{1}{2} \big \{ \partial_{i} \langle \partial_{j}, \partial_{k} \rangle + \langle \partial_{j}   \partial_{i}, \partial_{k}> - \partial_{k} <\partial_{i}, \partial_{j}>  \big \}. 
\end{equation} 
\end{theorem}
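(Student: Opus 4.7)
The plan is to mirror the classical Koszul argument, relying on the axioms of Definitions \ref{Riemannianmetric} and \ref{connection1} together with the hypothesis that $\Theta$ is generic.

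First I would establish uniqueness of $\nabla_{\partial_i}\partial_j$. Assuming compatibility and torsion-freeness, I write the three cyclic compatibility identities
\begin{align*}
\partial_i\langle\partial_j,\partial_k\rangle &= \langle \nabla_{\partial_i}\partial_j,\partial_k\rangle + \langle\partial_j,\nabla_{\partial_i}\partial_k\rangle,\\
\partial_j\langle\partial_k,\partial_i\rangle &= \langle \nabla_{\partial_j}\partial_k,\partial_i\rangle + \langle\partial_k,\nabla_{\partial_j}\partial_i\rangle,\\
\partial_k\langle\partial_i,\partial_j\rangle &= \langle \nabla_{\partial_k}\partial_i,\partial_j\rangle + \langle\partial_i,\nabla_{\partial_k}\partial_j\rangle.
\end{align*}
Adding the first two, subtracting the third, and invoking torsion-freeness $\nabla_{\partial_i}\partial_j=\nabla_{\partial_j}\partial_i$ to identify like terms, four of the six inner products should collapse and leave $2\langle\nabla_{\partial_i}\partial_j,\partial_k\rangle$ on the right. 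Two ingredients are crucial for making this cancellation survive the noncommutativity: the reality condition $\langle\partial_i,\partial_j\rangle=\langle\partial_j,\partial_i\rangle$ (forcing the metric components to be self-adjoint) and the self-adjointness condition (4) of Definition \ref{connection1}. Together these let me interchange $\langle X,Y\rangle$ with $\langle Y,X\rangle$ precisely where the classical proof uses symmetry.

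For existence, I would take the Koszul right-hand side as the \emph{definition} of $\langle\nabla_{\partial_i}\partial_j,\partial_k\rangle$. By nondegeneracy of $g$ on the free module $\mathcal{X}_\Theta$, prescribing the values at $\partial_k$ for $k=1,\dots,n$ determines a unique element $\nabla_{\partial_i}\partial_j\in\mathcal{X}_\Theta$ via the inverse matrix of $(\langle\partial_j,\partial_k\rangle)$. Torsion-freeness follows from the symmetry of the formula in $i,j$; compatibility is recovered by summing the three cyclic formulas; the required self-adjointness is verified term-by-term from Hermitian symmetry and the reality condition.

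To pass from $\partial_1,\dots,\partial_n$ to a general $Z\in\mathcal{D}_\Theta$ I would invoke the genericity hypothesis: any such $Z$ decomposes uniquely as $\sum a_i\partial_i+\mathrm{ad}_b$ with $\tau(b)=0$. Define $\nabla_Z$ by $\mathbb{C}$-linearity from this decomposition together with axiom (3) of Definition \ref{connection1}, which prescribes $\nabla_{\mathrm{ad}_b}$ as left multiplication by $b$; compatibility and torsion-freeness then propagate from the partial-derivative case. The main obstacle will be the noncommutative bookkeeping: since $\nabla$ is only $\mathbb{C}$-linear in the first slot but satisfies Leibniz in the second, verifying compatibility for general $X=\sum a_j\partial_j,\ Y=\sum b_k\partial_k$ requires tracking derivation terms such as $\delta_i(a_j)$ and $a_j\delta_i(b_k)$ in the correct order. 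The argument hinges on the reality and self-adjointness axioms being exactly strong enough to substitute for the classical symmetry of the inner product; without either, the Koszul cancellation collapses and uniqueness is lost.
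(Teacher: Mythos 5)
Your Koszul-style argument is correct and is essentially the proof of this theorem: the paper itself gives no proof but imports the statement from Rosenberg \cite{Ros1}, whose argument is exactly the one you describe — cyclic compatibility identities plus torsion-freeness, with the reality condition on $g$ and the self-adjointness axiom (4) of Definition \ref{connection1} standing in for classical symmetry of the inner product, nondegeneracy to solve for $\nabla_{\partial_i}\partial_j$, and the generic decomposition $\delta=\sum a_i\partial_i+\mathrm{ad}_b$ together with axiom (3) to extend to all of $\mathcal{D}_\Theta$. The only point worth making explicit is that the coefficients $a_i$ in that decomposition are scalars, which is what makes defining $\nabla_Z$ by $\mathbb{C}$-linearity in the first slot sufficient.
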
	

By Theorem \ref{Levi-Civita}, one can define Riemannian curvature for a given Riemannian metric. It is given as in the classical case by the formula
$$R(X,Y):= \bigtriangledown_{Y} \bigtriangledown_{X}- \bigtriangledown_{X} \bigtriangledown_{Y} + \bigtriangledown_{[X,Y]}: X_{\theta} \rightarrow X_{\theta}, \hspace{1 cm} X,Y \in D_{\theta},$$
where we have the symmetry $R_{ijkl}= -R_{jikl}$ and the Bianchi identity $R_{ijkl}+ R_{kijl} + R_{jkil}=0$. However, this tensor does not inherit other symmetries from the classical case. See Remark \ref{nonsymmetrictensorremark}.

%...........................................................%
\begin{remark} \label{uniquenessremark}
The property $(3)$ in Definition \ref{connection1} is necessary for the uniqueness of the compatible and torsion free connection. It is shown in \cite[Proposition 2]{Pet1} that if we drop this condition from Definition \ref{connection1}, we still get for any inner $*-$derivation a compatible and torsion free connection with nontrivial effect on the curvature.

\end{remark}

\begin{example} { \bf Conformal Flat Metric.} \label{NCTorus}
Choosing the conformal flat metric $ g_{ij}= <\partial_{i}, \partial_{j}> = e^{f} \delta_{ij}$, and using the formula (\ref{Cristoffell}) for the connection $<\bigtriangledown_{i} \partial_{j}, \partial_{k}>$, we obtain the curvature tensor 

\begin{equation} \label{NCR1212twodimensional}
R_{1212}= <R(\partial_{1},\partial_{2})\partial_{1}, \partial_{2}>= \big< \big( \bigtriangledown_{2} \bigtriangledown_{1}- \bigtriangledown_{1} \bigtriangledown_{2} \big) \partial_{1}, \partial_{1} \big>
\end{equation}
$$ = - \frac{1}{2}  \big\{ \partial_{2}(\partial_{2}(e^{f}) e^{-f})  + \partial_{1}(\partial_{1}(e^{f}) e^{-f}) \big\} e^{f}.$$

\end{example}

\begin{remark}
If we choose $f$ so that it commutes with its partial derivatives $\partial_{1}$ and $\partial_{2}$,  the curvature tensor (\ref{NCR1212twodimensional}) is simplified to the classical case $- \frac{1}{2} e^{f}\bigtriangleup f  := - \frac{1}{2} e^{f} ( \partial_{1}^{2} + \partial_{2}^{2})f.$ 
\end{remark}

\begin{proposition} \label{G-B}
In Example \ref{NCTorus}, the Gauss-Bonnet theorem holds for any conformal flat metric.
\end{proposition}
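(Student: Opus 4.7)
The plan is to reduce the Gauss-Bonnet theorem for the conformal flat metric $g_{ij}=e^f\delta_{ij}$ to the vanishing of the trace of a total derivative. Classically, in two dimensions the scalar-curvature integrand (Gaussian curvature times volume form) equals $R_{1212}/\sqrt{\det g}$, and since $\det g = e^{2f}$ in our setting, the natural candidate for the noncommutative Gauss-Bonnet integrand is $R_{1212}\cdot e^{-f}$. So I intend to show that $\tau(R_{1212}\cdot e^{-f}) = 0$, where $\tau$ denotes the canonical tracial state on $A_{\Theta}^{\infty}$.

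First, I take the expression for $R_{1212}$ computed in Example \ref{NCTorus}, namely
\begin{equation*}
R_{1212} = -\tfrac{1}{2}\bigl\{\partial_{2}(\partial_{2}(e^{f})e^{-f}) + \partial_{1}(\partial_{1}(e^{f})e^{-f})\bigr\}\, e^{f}.
\end{equation*}
The trailing $e^{f}$ factor on the right is precisely what the volume density $e^{f}$ will cancel, so I right-multiply by $e^{-f}$ to obtain
\begin{equation*}
R_{1212}\cdot e^{-f} = -\tfrac{1}{2}\bigl\{\partial_{2}(\partial_{2}(e^{f})e^{-f}) + \partial_{1}(\partial_{1}(e^{f})e^{-f})\bigr\}.
\end{equation*}

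The second step is to apply the canonical trace $\tau$. Because $\tau$ is the unique $\alpha$-invariant tracial state on $A_{\Theta}^{\infty}$ (given on the Fourier expansion by reading off the constant coefficient), it annihilates every derivation $\partial_{j}$, that is, $\tau\circ\partial_{j} = 0$ for $j=1,2$. Applying $\tau$ to the displayed expression for $R_{1212}\cdot e^{-f}$ therefore gives zero term by term, which is the Gauss-Bonnet theorem in this setting.

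The only real subtlety is picking the correct noncommutative ordering when passing from $R_{1212}$ to the scaled curvature; a priori one could try $e^{-f/2}R_{1212}e^{-f/2}$ or $e^{-f}R_{1212}$, but the form in which Example \ref{NCTorus} writes $R_{1212}$ (with $e^{f}$ appearing on the right) makes right-multiplication by $e^{-f}$ the natural choice, and this is also harmless under $\tau$ by cyclicity. I expect this ordering choice, rather than any hard computation, to be the main conceptual point; after it is made, the proof collapses to the standard observation that the trace kills pure derivations.
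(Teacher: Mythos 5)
Your proposal is correct and follows essentially the same route as the paper: the paper writes $\tau(R\sqrt{\det g}) = 2\tau(e^{-2f}R_{1212}e^{f})$ (using $R_{1212}=R_{2121}$), which by cyclicity of $\tau$ equals $2\tau(R_{1212}e^{-f})$, and then, exactly as you do, the trailing $e^{f}$ in $R_{1212}$ cancels and the trace annihilates the remaining total derivatives $\partial_{j}(\cdot)$.
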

\begin{proof}\cite[Proposition 4.1]{Ros1}
We have $R = e^{-2f}(R_{1212}+ R_{2121})$ and since in this case we have also $R_{1212} = R_{2121}$, it implies that
$$\tau(R \sqrt{detg}) = 2 \tau(e^{-2f}R_{1212} e^{f}) =- \tau \big(e^{-2f} \big\{ \partial_{2}(\partial_{2}(e^{f}) e^{-f})  + \partial_{1}(\partial_{1}(e^{f}) e^{-f}) \big\} e^{2f} \big) =0. $$
\end{proof}
%...................................................................%
%...................................................................%
\section{The Einstein-Hilbert action} \label{SecEH}
It was shown in \cite{Mas3} that the Einstein-Hilbert (E-H) action $\int R(g)$ for noncommutative $4-$torus is non-positive and the extremum case occurs only when the metric is constant. In this section we discuss these questions in the setting with the presence of a Levi-Civita connection.  

%...................................................................%
%...................................................................%

\subsection{Einstein-Hilbert action on noncommutative 4-tori} \label{sectionE-H}
We obtain in this subsection similar results as in \cite{Mas3} about the sign and extremum of E-H action. We will also discuss about the same questions for non-conformal diagonal metrics.

\begin{proposition} \label{E-Himproved}
Assume that the noncommutative $4-$torus along with its canonical derivations $\partial_{1}, ..., \partial_{4}$ generating the free projective left $A_{\Theta}$-module  $X_{\Theta}$ are given. Assume also the conformal flat metric on $X_{\Theta}$ is given by $g_{ij} = <\partial_{i}, \partial_{j}>= e^{f} \delta_{ij}, \hspace{1 cm} i,j= 1,...,4$ where $f=f^{*} \in A_{\Theta}$. Then for the associated scalar curvature $R$, we have $\tau(R\sqrt{detg}) \leq 0$ and the extremum case occurs only when the conformal coefficient is constant.
\end{proposition}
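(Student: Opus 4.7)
The plan is to emulate Proposition~\ref{G-B}/Example~\ref{NCTorus} in dimension four, carrying out each step with care for non-commutativity, and at the end regrouping the result into a manifestly non-positive expression. First I would apply formula \eqref{Cristoffell} to the conformal flat metric $g_{ij}=e^f\delta_{ij}$ on the $4$-torus. Writing $h=e^f$ and $L_i:=\partial_i(h)h^{-1}$ (the non-commutative analogue of $\partial_i f$), the Christoffel data reduce to
$$\langle\nabla_{\partial_i}\partial_j,\partial_k\rangle=\tfrac12\bigl(\partial_i(h)\delta_{jk}+\partial_j(h)\delta_{ik}-\partial_k(h)\delta_{ij}\bigr),$$
and inverting the diagonal metric ($g^{kl}=h^{-1}\delta_{kl}$) gives $\nabla_{\partial_i}\partial_j=\sum_l\Gamma^l_{ij}\partial_l$ with $\Gamma^l_{ij}$ a simple polynomial in $\partial_i(h)$ and $h^{-1}$.

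Next I would insert these $\Gamma^l_{ij}$ into $R(\partial_i,\partial_j)\partial_k=(\nabla_{\partial_j}\nabla_{\partial_i}-\nabla_{\partial_i}\nabla_{\partial_j})\partial_k$ and then contract $R_{ijkl}$ against $g^{ik}g^{jl}$ (with the left-right ordering chosen as in the $2$-dimensional prototype $R=e^{-2f}(R_{1212}+R_{2121})$) to produce the scalar curvature $R$ in dimension four. Multiplying by $\sqrt{\det g}=e^{2f}$ collapses two factors of $h^{-1}$ and yields $R\sqrt{\det g}$ as an explicit sum of monomials built from $\partial_i(h)$, $\partial_i\partial_j(h)$ and $h^{\pm 1}$. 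At this point the sought inequality becomes a purely algebraic statement about the trace of that sum.

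The decisive step is to apply $\tau$ and use the two tools available: the trace identity $\tau\circ\partial_i=0$ (integration by parts), which lets one move each $\partial_i$ off second-derivative terms, and the cyclic property $\tau(ab)=\tau(ba)$, which lets one permute the $h^{\pm 1}$ factors. Following the $2$-dimensional template, the second-derivative pieces $\partial_i(\partial_i(h)h^{-1})$ generate, after one integration by parts, expressions of the form $-\tau\bigl(\partial_i(h)h^{-1}\partial_i(h)h^{-1}e^{2f}\bigr)$, which after cyclic rearrangement are exactly $-\tau(L_i^*L_i\,h^{2})$-type quantities with $L_i=L_i^*$ (recall $f=f^*$, so $\partial_i(h)$ is self-adjoint). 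Grouping the remaining first-derivative terms with the right coefficients should give
$$\tau\bigl(R\sqrt{\det g}\bigr)=-\sum_{i=1}^{4}\tau\bigl(T_i^{*}T_i\bigr)\;\leq\;0,$$
for suitable elements $T_i$ built linearly from $L_i$ (and $h^{1/2}$); this is the heart of the argument, and the analogue of $\tfrac{3}{2}\!\int e^{f}|\nabla f|^2$ in the commutative limit.

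Finally, the extremum analysis follows: $\tau(R\sqrt{\det g})=0$ forces every $T_i=0$, hence every $L_i=0$, hence $\partial_i(h)=0$ for all $i$, so $h$ lies in the centre of $A_\Theta^\infty$; for generic $\Theta$ the centre is $\mathbb{C}\cdot 1$, so $f$ is a (real) constant. The main obstacle I foresee is bookkeeping in Step~3: in the non-commutative setting one cannot write $\partial_i(e^f)=e^f\partial_i f$, so the Duhamel expansion $\partial_i(e^f)=\int_0^1 e^{sf}(\partial_i f)e^{(1-s)f}\,ds$ may be needed to verify that the cross terms cancel and that the residual quadratic form is honestly non-positive; equivalently, one must carefully choose which of the several equivalent orderings of the contraction to use so that the final regrouping into $-\tau(T_i^*T_i)$ goes through.
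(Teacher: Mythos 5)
Your overall route is the same as the paper's: compute the Christoffel data from \eqref{Cristoffell}, assemble the $R_{ijij}$, contract to get $R$, multiply by $e^{2f}$, apply $\tau$ with one integration by parts, and regroup the surviving quadratic expression into a sum of traces of the form $\pm\tau(T_i^*T_i)$, with the extremum case then following from faithfulness of $\tau$. The paper does exactly this and lands on
$\tau(Re^{2f})=\tfrac{3}{2}\sum_{i}\tau\bigl((\partial_ie^{f})e^{-f}(\partial_ie^{f})\bigr)$.

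There is, however, a genuine error at what you yourself call the heart of the argument: the sign of this last quantity. You assert that $f=f^*$ implies $\partial_i(h)=\partial_i(e^f)$ is \emph{self-adjoint}. Under the paper's conventions the canonical derivations satisfy $(\partial_i a)^*=-\partial_i(a^*)$, so $\partial_i(e^f)$ is \emph{skew}-adjoint; this is precisely what the paper uses to write
$\tau\bigl((\partial_ie^{f})e^{-f}(\partial_ie^{f})\bigr)=-\tau\bigl((\partial_ie^{f})e^{-f/2}\,((\partial_ie^{f})e^{-f/2})^{*}\bigr)\le 0$
and hence $\tau(Re^{2f})\le 0$. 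With your stated convention ($\partial_i(h)$ self-adjoint) the identical computation gives
$\tau\bigl((\partial_ie^{f})e^{-f}(\partial_ie^{f})\bigr)=\tau(c^*c)\ge 0$ with $c=(\partial_ie^{f})e^{-f/2}$, so the action comes out as $+\tfrac32\sum_i\tau(c_i^*c_i)\ge 0$ and your proposed regrouping into $-\sum_i\tau(T_i^*T_i)$ cannot be carried out. (This is not a pedantic point: in the genuinely commutative case, where $\partial_i f$ \emph{is} real, the total scalar curvature of a conformally flat metric on $T^4$ is $(n-1)(n-2)\int e^{(n-2)u}|\nabla u|^2\,dx\ge 0$, i.e.\ the inequality reverses. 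The non-positivity in the proposition is an artifact of the factor of $i$ in the derivations, and a proof must invoke it explicitly.) A secondary, smaller omission: you do not record that each $R_{ijij}$ in dimension four contains, besides the two second-order terms, first-order cross terms $-\tfrac14(\partial_ke^{f})e^{-f}(\partial_ke^{f})$ from the remaining two directions; these are what prevent the cancellation that makes the two-dimensional Gauss--Bonnet trace vanish, so they must be tracked to get the coefficient $\tfrac32$ and a nonzero answer at all. The extremum step is fine, though note that $\partial_ie^{f}=0$ for all $i$ already forces $e^{f}\in\mathbb{C}1$ (only the zero Fourier mode survives) without any genericity assumption on $\Theta$.
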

\begin{proof}
To obtain the scalar curvature, we first have $R_{ii}= R_{iji}^{j}= e^{-f}R_{ijij}$, $j \neq i$,  $j \in \{1,...,4 \}$. Since we shall show that $R_{ijij} = R_{jiji}$, each of these terms are repeated two times. So we get 
\begin{equation} \label{EHRsymbolic}
R = e^{-f} (R_{11} +R_{22} + R_{33} + R_{44}) = 2e^{-2f} \big \{   R_{1212} + R_{1313} + R_{1414} + R_{2323} + R_{2424} + R_{3434} \big \} 
\end{equation}

We can then apply the Christoffel coefficients for the Riemannian metric given by the Levi-Civita's Theorem \ref{Levi-Civita} to obtain the related Riemann tensors. We obtain for example
\begin{equation*} 
R_{1212}= - \frac{1}{2} \big\{  \partial_{1} \big( (\partial_{1} e^{f}) e^{-f} \big) + \partial_{2} \big( (\partial_{2} e^{f})e^{-f}  \big) \big \}e^{f}  
\end{equation*} 

\begin{equation} \label{EHRtensor}
- \frac{1}{4} (\partial_{3} e^{f}) e^{-f} (\partial_{3} e^{f})- \frac{1}{4} (\partial_{4} e^{f})e^{-f} (\partial_{4} e^{f}),
\end{equation}
and similar relations for other $R_{ijij}$'s. Since the metric and the indices in $R_{1212}$ are symmetric  relative to $1$ and $2$, we will have $R_{1212} = R_{2121}$. Hence according to the relations (\ref{EHRsymbolic}) and (\ref{EHRtensor}) we obtain

\begin{equation} \label{EHRfinal}
\begin{split}
R & = 6 e^{-2f} \Sigma_{i =1}^{4} \big\{ 
- \frac{1}{2}  \partial_{i} \big( (\partial_{i} e^{f}) e^{-f} \big)e^{f}    - \frac{1}{4} (\partial_{i} e^{f}) e^{-f} (\partial_{i} e^{f}) \big\} \\
 & = - \frac{3}{2} e^{-2f} \Sigma_{i =1}^{4} \big\{ 
2 \partial_{i} \big( (\partial_{i} e^{f}) e^{-f} \big)e^{f} + (\partial_{i} e^{f}) e^{-f} (\partial_{i} e^{f}) \big\}.
\end{split}
\end{equation}
Now we compute the Einstein-Hilbert action
\begin{equation} \label{eq1}
\begin{split}
\tau (R e^{2f}) & = - \frac{3}{2} \Sigma_{i =1}^{4}  \tau \big( e^{-2f} \big\{ 2 \partial_{i} \big( (\partial_{i} e^{f}) e^{-f} \big)e^{f} + (\partial_{i} e^{f}) e^{-f} (\partial_{i} e^{f})  \big\}  e^{2f} \big) \\
 & =  - \frac{3}{2} \Sigma_{i =1}^{4}  \tau \big( 2 \partial_{i} \big( (\partial_{i} e^{f}) e^{-f} \big)e^{f} + (\partial_{i} e^{f}) e^{-f} (\partial_{i} e^{f})  \big).
\end{split}
\end{equation}

Besides, the property of integration by parts for $\tau$
$$
\tau \big(\partial_{i} \big( (\partial_{i} e^{f}) e^{-f} \big)e^{f} \big)  =- \tau \big( (\partial_{i} e^{f}) e^{-f} (\partial_{i} e^{f})  \big),
$$
induces 

\begin{equation} \label{E-Hrelation}
\begin{split}
\tau(R e^{2f}) & = \frac{3}{2} \Sigma_{i =1}^{4} \tau \big( (\partial_{i} e^{f}) e^{-f} (\partial_{i} e^{f})  \big)\\
& = \frac{3}{2} \Sigma_{i =1}^{4} \tau \big( (\partial_{i} e^{f}) e^{-f/2} e^{-f/2}(\partial_{i} e^{f})  \big) \\
 & = - \frac{3}{2} \Sigma_{i =1}^{4} \tau \big( (\partial_{i} e^{f}) e^{-f/2} \big( (\partial_{i} e^{f}) e^{-f/2} \big)^{*}  \big) \leq 0,
\end{split}
\end{equation}

where we have used $(\partial_{i} a)^{*} =- \partial_{i} a^{*}$ and   $\tau(a a^{*}) \geq 0$. This proves the first statement of the theorem. In the extremum case if we have $\tau (R e^{2f})= 0$, then we should have

\begin{equation*} 
\begin{split}
 & \tau \big( (\partial_{i} e^{f}) e^{-f/2} \big( (\partial_{i} e^{f}) e^{-f/2} \big)^{*}  \big) = 0, \\
 & (\partial_{i} e^{f}) e^{-f/2} =0, \\
 & \partial_{i} e^{f} =0, \hspace{1 cm} i=1,...,4.
\end{split}
\end{equation*}
So the conformal coefficient $e^{f}$ is constant. This is the second statement of the theorem.

\end{proof}

\begin{remark}
Computing the action functional as Wodzicki residue, it is shown in \cite{Sit1} and \cite{Sit2} that the E-H action for the conformal flat metric up to a constant multiple is

\begin{equation*} 
\begin{split}
\tau(R \sqrt{g}) & =  Wres D_{e^{f}}^{-1} =\Sigma_{i=1}^{4}  \tau (e^{2f} \partial_{i}e^{f}\partial_{i}e^{f} +  e^{f}( \partial_{i}e^{f})e^{f}( \partial_{i}e^{f}) ) \\
 & = - \Sigma_{i=1}^{4} \tau (e^{f} \partial_{i}e^{f}(e^{f}\partial_{i}e^{f})^{*}) - \Sigma_{i=1}^{4}\tau(  e^{f/2}( \partial_{i}e^{f})e^{f/2}(e^{f/2}( \partial_{i}e^{f})e^{f/2} )^{*}) \leq 0.
\end{split}
\end{equation*}

Comparing this with Proposition \ref{E-Himproved} can reveal the possibility of existing more links between noncommutative connection theory and noncommutative spectral theoretical approaches.
\end{remark}

The non-conformally rescaled metrics have rarely been explored, especially in higher dimensions, cf. \cite{Dab1}. In the following proposition we prove the non-positivity of the E-H action for some other non-conformal diagonal metrics.   

\begin{proposition} \label{E-Hnewprop}
Given all the assumptions of Proposition \ref{E-Himproved}, except that we consider the metric 
$$ \begin{pmatrix}
e^{f} & 0 & 0 & 0 \\
0 & e^{f} & 0 & 0 \\
0 & 0 & 1 & 0 \\
0 & 0 & 0 & 1 \\
\end{pmatrix}.$$ 
Then the associated Einstein-Hilbert action functional is non-positive.
\end{proposition}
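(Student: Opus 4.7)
The plan mirrors the proof of Proposition \ref{E-Himproved}: compute the Levi-Civita connection, Riemann tensor, and scalar curvature via Theorem \ref{Levi-Civita}, and then evaluate $\tau(R\sqrt{\det g})$ using integration by parts. Here $\sqrt{\det g}=e^f$, while $g^{ii}=e^{-f}$ for $i\in\{1,2\}$ and $g^{ii}=1$ for $i\in\{3,4\}$. Since only $g_{11},g_{22}$ depend on the coordinates, formula (\ref{Cristoffell}) gives $\langle\nabla_{\partial_i}\partial_j,\partial_k\rangle=0$ unless at least two of $\{i,j,k\}$ lie in $\{1,2\}$. Adapting the calculation that produced (\ref{EHRtensor}) to these Christoffels, one finds
\begin{equation*}
R_{1212} = -\tfrac{1}{2}\bigl\{\partial_1\bigl((\partial_1 e^f)e^{-f}\bigr) + \partial_2\bigl((\partial_2 e^f)e^{-f}\bigr)\bigr\}e^f - \tfrac{1}{4}(\partial_3 e^f)^2 - \tfrac{1}{4}(\partial_4 e^f)^2,
\end{equation*}
together with $R_{1j1j}=R_{2j2j}=-\tfrac{1}{2}\partial_j^2 e^f+\tfrac{1}{4}(\partial_j e^f)e^{-f}(\partial_j e^f)$ for $j\in\{3,4\}$, and $R_{3434}=0$; the symmetries $R_{ijij}=R_{jiji}$ must be checked directly since for $i\in\{1,2\}$, $j\in\{3,4\}$ the underlying $2\times 2$ block of the metric is not symmetric under $i\leftrightarrow j$.

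Next I would assemble the scalar curvature as $R=2e^{-2f}R_{1212}+2e^{-f}\sum_{j\in\{3,4\}}(R_{1j1j}+R_{2j2j})$, multiply by $e^f$, and apply $\tau$. Cyclicity of $\tau$ and $\tau\circ\partial_i=0$ kill every total-derivative piece, including $\tau(\partial_j^2 e^f)$ and $\tau(\partial_i((\partial_i e^f)e^{-f}))$. The $(\partial_j e^f)^2$ piece of $R_{1212}$ contributes $-\tfrac{1}{2}\tau\bigl((\partial_j e^f)e^{-f}(\partial_j e^f)\bigr)$ per $j\in\{3,4\}$ after cyclic rearrangement, while each of $R_{1j1j}$ and $R_{2j2j}$ contributes $+\tfrac{1}{2}\tau\bigl((\partial_j e^f)e^{-f}(\partial_j e^f)\bigr)$. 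Summing all contributions,
\begin{equation*}
\tau(Re^f) = \tfrac{1}{2}\tau\bigl((\partial_3 e^f)e^{-f}(\partial_3 e^f)\bigr) + \tfrac{1}{2}\tau\bigl((\partial_4 e^f)e^{-f}(\partial_4 e^f)\bigr).
\end{equation*}

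To conclude, since $e^f$ is self-adjoint and $(\partial_j a)^*=-\partial_j a^*$, the element $\partial_j e^f$ is skew-adjoint; setting $a=e^{-f/2}(\partial_j e^f)$ gives $\tau(aa^*)=-\tau\bigl((\partial_j e^f)e^{-f}(\partial_j e^f)\bigr)\ge 0$, so each summand above is $\le 0$ and the Einstein-Hilbert action is non-positive. The main obstacle will be the delicate cancellation that eliminates the $\partial_1,\partial_2$ contributions and leaves only half of the mixed $\partial_3,\partial_4$ contributions: this requires careful tracking of the $e^{\pm f}$ orderings through the cyclic rearrangements, together with the direct verification of $R_{ijij}=R_{jiji}$ for the mixed indices, where the usual symmetry argument used in Proposition \ref{E-Himproved} does not apply.
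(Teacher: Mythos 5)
Your proposal is correct and follows essentially the same route as the paper: compute the Christoffel symbols and the components $R_{1212}$, $R_{1j1j}$ for $j\in\{3,4\}$, kill the total-derivative terms under $\tau$, arrive at $\tau(Re^{f})=\tfrac{1}{2}\tau\bigl((\partial_{3}e^{f})e^{-f}(\partial_{3}e^{f})\bigr)+\tfrac{1}{2}\tau\bigl((\partial_{4}e^{f})e^{-f}(\partial_{4}e^{f})\bigr)$, and conclude non-positivity from the skew-adjointness of $\partial_{j}e^{f}$. The only cosmetic difference is that the paper computes $R_{1j1j}$ and $R_{j1j1}$ separately (in superficially different forms) and adds them under the trace, whereas you invoke the symmetry $R_{ijij}=R_{jiji}$ for the mixed indices; that symmetry does hold here (via $\partial_{j}\bigl((\partial_{j}e^{f})e^{-f}\bigr)e^{f}=\partial_{j}^{2}e^{f}-(\partial_{j}e^{f})e^{-f}(\partial_{j}e^{f})$), and in any case the two traces coincide, so both bookkeepings land on the same formula.
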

\begin{proof}
We first compute the the connection coefficients
$$\bigtriangledown_{1} \partial_{1} = \frac{1}{2} \big\{ (\partial_{1}e^{f})e^{-f} \partial_{1} - (\partial_{2}e^{f})e^{-f} \partial_{2}- (\partial_{3}e^{f}) \partial_{3}-(\partial_{4}e^{f}) \partial_{4}   \big\},$$
$$\bigtriangledown_{2} \partial_{2} = \frac{1}{2} \big\{- (\partial_{1}e^{f})e^{-f} \partial_{1} + (\partial_{2}e^{f})e^{-f} \partial_{2}- (\partial_{3}e^{f}) \partial_{3}-(\partial_{4}e^{f}) \partial_{4}   \big\},$$ 
$$\bigtriangledown_{1} \partial_{2}=\bigtriangledown_{2} \partial_{1} = \frac{1}{2} \big\{ (\partial_{2}e^{f})e^{-f} \partial_{1} + (\partial_{1}e^{f})e^{-f} \partial_{2}  \big\},$$

$$ \bigtriangledown_{3} \partial_{4}=\bigtriangledown_{4} \partial_{3}=0, $$

$$\bigtriangledown_{1} \partial_{3}=\frac{1}{2} (\partial_{3}e^{f}) e^{-f} \partial_{1}, \hspace{1 cm} 
\bigtriangledown_{1} \partial_{4}=\frac{1}{2} (\partial_{4}e^{f}) e^{-f} \partial_{1},  $$

$$\bigtriangledown_{2} \partial_{3}= \frac{1}{2} (\partial_{3}e^{f}) e^{-f} \partial_{2}, \hspace{1 cm}  
\bigtriangledown_{2} \partial_{4}= \frac{1}{2} (\partial_{4}e^{f}) e^{-f} \partial_{2}. $$
To compute the scalar curvature, we see that 
\begin{equation*} 
\begin{split}
R & = e^{-2f} \big( R_{1212} + R_{2121} \big)+ e^{-f} \big( R_{1313} + R_{3131} + R_{1414}  \\
 & + R_{4141} + R_{2323} + R_{3232} + R_{2424} + R_{4242} \big) + R_{3434} + R_{4343}.
\end{split}
\end{equation*}
Due to the symmetries between the indices $1,2$ and $3,4$, we will have 
$$R =2 e^{-2f}R_{1212} + 2e^{-f} \big( R_{1313} + R_{1414}+ R_{2323} + R_{2424} \big) + 2 R_{3434}  $$
where 

\begin{equation*}
\begin{split}
R_{1212} & = - \frac{1}{2} \partial_{1} \big( (\partial_{1}e^{f})e^{-f}\big) e^{f} - \frac{1}{2}  \partial_{2}\big( (\partial_{2}e^{f})e^{-f}\big) e^{f} \\
 & - \frac{1}{4} (\partial_{3}e^{f}) (\partial_{3}e^{f}) - \frac{1}{4} (\partial_{4}e^{f})(\partial_{4}e^{f}),\\
 R_{1313} & = - \frac{1}{2} \partial_{3}^{2} e^{f} + \frac{1}{4} (\partial_{3}e^{f}) e^{-f} (\partial_{3}e^{f}), \hspace{1 cm}\\
 R_{1414} & = - \frac{1}{2} \partial_{4}^{2} e^{f} + \frac{1}{4} (\partial_{4}e^{f}) e^{-f} (\partial_{4}e^{f}),\\
 R_{3131} & = - \frac{1}{2} \partial_{3} \big( (\partial_{3}e^{f}) e^{-f} \big) e^{f} - \frac{1}{4} (\partial_{3}e^{f}) e^{-f} (\partial_{3}e^{f}), \hspace{1 cm} \\
 R_{4141} & = - \frac{1}{2} \partial_{4} \big( (\partial_{4}e^{f}) e^{-f} \big) e^{f} - \frac{1}{4} (\partial_{4}e^{f}) e^{-f} (\partial_{4}e^{f}),
\end{split}
\end{equation*}
and $R_{3434}=0$. The E-H action is then computed as following
\begin{equation*}
\begin{split}
E-H :=&  \tau(R \sqrt{g})= \tau(R e^{f})= 2 \tau \big( e^{-f} R_{1212} + R_{1414} + R_{1313} + R_{3131} + R_{4141} \big) \\
= &  2 \tau \Big(   
- \frac{1}{2} \partial_{1} \big( (\partial_{1}e^{f})e^{-f}\big) - \frac{1}{2}  \partial_{2}\big( (\partial_{2}e^{f})e^{-f}\big) - \frac{1}{4} e^{-f}(\partial_{3}e^{f}) (\partial_{3}e^{f}) - \frac{1}{4} e^{-f}(\partial_{4}e^{f})(\partial_{4}e^{f})
 \Big)\\
& + 2 \tau \Big( - \frac{1}{2} \partial_{3}^{2} e^{f}  - \frac{1}{2} \partial_{4}^{2} e^{f} - \frac{1}{2} \partial_{3} \big( (\partial_{3}e^{f}) e^{-f} \big) e^{f} - \frac{1}{2} \partial_{4} \big( (\partial_{4}e^{f}) e^{-f} \big) e^{f} \Big)\\
= &  2 \tau \Big( 
 - \frac{1}{4} e^{-f}(\partial_{3}e^{f}) (\partial_{3}e^{f}) - \frac{1}{4} e^{-f}(\partial_{4}e^{f})(\partial_{4}e^{f})  + \frac{1}{2} (\partial_{3}e^{f}) e^{-f} (\partial_{3} e^{f})+ \frac{1}{2}(\partial_{4}e^{f}) e^{-f}(\partial_{4} e^{f}) \Big)
\end{split}
\end{equation*}
\begin{equation} \label{traceE-Hnewprop}
\begin{split}
& =2 \tau \Big(
+ \frac{1}{4} (\partial_{3}e^{f}) e^{-f} (\partial_{3} e^{f})+ \frac{1}{4}(\partial_{4}e^{f}) e^{-f}(\partial_{4} e^{f}) \Big) \\
& = \frac{1}{2} \tau \Big((\partial_{3}e^{f}) e^{-f} (\partial_{3} e^{f}) + (\partial_{4}e^{f}) e^{-f}(\partial_{4} e^{f})  \Big)\\
& = \frac{1}{2} \tau \Big((\partial_{3}e^{f}) e^{-\frac{f}{2}} e^{-\frac{f}{2}} (\partial_{3} e^{f}) + (\partial_{4}e^{f}) e^{-\frac{f}{2}} e^{-\frac{f}{2}}(\partial_{4} e^{f}) \Big)\\
& - \frac{1}{2} \tau \big( (e^{-\frac{f}{2}}(\partial_{3}e^{f}))^{*} e^{-\frac{f}{2}} (\partial_{3} e^{f}) + (e^{-\frac{f}{2}}(\partial_{4}e^{f}))^{*}  e^{-\frac{f}{2}}(\partial_{4} e^{f})\big) \leq 0.
\end{split}
\end{equation} 
\end{proof}
In spite of Proposition \ref{E-Himproved}, we do not have the extremum property in Proposition \ref{E-Hnewprop} since $\partial_{1}$ and $\partial_{2}$ are no longer present in the curvature formula (\ref{traceE-Hnewprop}). Besides, due to the symmetry of the metric, the same result in Proposition \ref{E-Hnewprop} holds for metrics such as 
$$ \begin{pmatrix}
e^{f} & 0 & 0 & 0 \\
0 & 1 &  0 & 0 \\
0 & 0 & e^{f} & 0 \\
0 & 0 & 0  & 1 \\
\end{pmatrix}. $$
\\
%...................................................................%
%...................................................................%

\subsection{The gradient of Einstein-Hilbert action}
An important step toward the associated flow of the E-H action, is computing the gradient of the E-H action. The gradient of this action was discussed and obtained in \cite{Fat1} in the spectral setting by means of rearrangement Lemma \cite{Con2}.  We here obtain an analogue of that result for the E-H action in our setting.    
\\
Consider the E-H action as a function of the self-adjoint element $f \in A_{\theta}^{\infty}$
\begin{equation} \label{E-Hrelation}
\Omega(f)= \frac{3}{2} \Sigma_{i =1}^{4} \tau \big( (\partial_{i} e^{f}) e^{-f} (\partial_{i} e^{f}) \big),
\end{equation}
To compute the gradient of E-H action, we should compute $\frac{d}{dt}_{|t=0} \Omega(f+th) $ where $h = h^{*} \in A_{\theta}^{\infty}$. As $\frac{d}{dt}$ commutes with $\partial_{i}$ and the Leibniz rule holds for $\frac{d}{dt}$, we get

\begin{equation*}
\begin{split}
\frac{d}{dt} \Omega(f+th) = &  \frac{3}{2} \Sigma_{i =1}^{4} \tau \big \{   
 (\partial_{i} \frac{d}{dt} e^{f+th}) e^{-f-th} (\partial_{i} e^{f+th})\\
 & + (\partial_{i} e^{f+th}) (\frac{d}{dt}e^{-f-th}) (\partial_{i} e^{f+th})  + (\partial_{i} e^{f+th}) e^{-f-th} (\partial_{i} \frac{d}{dt}e^{f+th}) \big\}.
\end{split}
\end{equation*}
Using the following assertion from \cite{Con2} 
$$\frac{d}{dt} e^{f+th} =\frac{1-e^{\bigtriangledown}}{\bigtriangledown}(-h) e^{f}=- \frac{1-e^{\bigtriangledown}}{\bigtriangledown}(h) e^{f},$$   
we will have

\begin{equation} \label{GradientE-H}
\begin{split}
& \frac{d}{dt}_{|t=0} \Omega(f+th)= \frac{3}{2} \Sigma_{i =1}^{4} \tau \big \{- (\partial_{i} (\frac{1-e^{\bigtriangledown}}{\bigtriangledown}(h) e^{f}) e^{-f} (\partial_{i} e^{f}) \\
&\hspace{1 cm} + (\partial_{i} e^{f}) (\frac{1- e^{\bigtriangledown}}{\bigtriangledown}(h) e^{-f}) (\partial_{i} e^{f}) - (\partial_{i} e^{f}) e^{-f} (\partial_{i}(1-\frac{e^{-\bigtriangledown}}{\bigtriangledown}(h) e^{f})) \big\},
\end{split}
\end{equation}
which by using the tracial property of $\tau$ and integration by parts is equal to

\begin{equation*}
\begin{split}
 = & \frac{3}{2} \Sigma_{i =1}^{4} \tau \big \{\frac{1- e^{\bigtriangledown}}{\bigtriangledown}(h) e^{f} \partial_{i}(e^{-f} (\partial_{i} e^{f})) + \frac{1- e^{\bigtriangledown}}{\bigtriangledown}(h) e^{-f} (\partial_{i} e^{f})(\partial_{i} e^{f}) \\
& + \frac{1 - e^{\bigtriangledown}}{\bigtriangledown}(h) e^{f} \partial_{i}((\partial_{i} e^{f}) e^{-f}) \big\},
\end{split}
\end{equation*}
\\
which by applying the formula $\tau(\bigtriangledown(h)e^{f}x) = \tau(h e^{f} (-\bigtriangledown(x)))$ becomes

\begin{equation} \label{gradientder}
\begin{split}
= & \frac{3}{2} \Sigma_{i =1}^{4} \tau \big \{h e^{f} \frac{e^{- \bigtriangledown}-1}{\bigtriangledown}\partial_{i}(e^{-f} (\partial_{i} e^{f})) \\
& + h e^{-f} \frac{e^{- \bigtriangledown}-1}{\bigtriangledown}(\partial_{i} e^{f})(\partial_{i} e^{f})
+ h e^{f} \frac{e^{- \bigtriangledown}-1}{\bigtriangledown} \partial_{i}((\partial_{i} e^{f}) e^{-f}) \big\}.
\end{split}
\end{equation}

Now since by definition we have 
\begin{equation} \label{gradientderat0}
\frac{d}{dt}|_{t=0} \Omega(f+th) = <Grad_{f} \Omega, h>= \tau(h Grad_{f} \Omega).
\end{equation}

The relations (\ref{gradientder}) and (\ref{gradientderat0}) induce

\begin{equation} \label{gradientE-Hfinal}
\begin{split}
Grad_{f} \Omega = & \frac{3}{2} \Sigma_{i =1}^{4} \tau \big \{ e^{f} \frac{e^{- \bigtriangledown}-1}{\bigtriangledown}\partial_{i}(e^{-f} (\partial_{i} e^{f})) \\
 & + e^{-f} \frac{e^{- \bigtriangledown}-1}{\bigtriangledown}(\partial_{i} e^{f})(\partial_{i} e^{f})
+ e^{f} \frac{e^{- \bigtriangledown}-1}{\bigtriangledown} \partial_{i}((\partial_{i} e^{f}) e^{-f}) \big\}.
\end{split}
\end{equation}

So we have shown
\begin{proposition}
The gradient of the E-H action of the conformal metric is given by (\ref{gradientE-Hfinal}).
\end{proposition}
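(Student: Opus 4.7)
The plan is to compute $\frac{d}{dt}\big|_{t=0}\Omega(f+th)$ directly from (\ref{E-Hrelation}) and then read off $\mathrm{Grad}_f\Omega$ by matching against the defining pairing $\tau(h\,\mathrm{Grad}_f\Omega)$ in (\ref{gradientderat0}). The strategy mirrors the Fathizadeh--Khalkhali calculation in the spectral setting, adapted to the Levi-Civita formulation used here.

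First I differentiate under $\tau$, using that $\partial_i$ commutes with $\frac{d}{dt}$ and that the Leibniz rule holds; since each summand of $\Omega(f)$ carries three $f$-dependent factors of the form $(\partial_i e^f)\,e^{-f}\,(\partial_i e^f)$, this step produces three contributions per index $i$. Next I substitute the rearrangement identity $\frac{d}{dt}e^{f+th} = -\frac{1-e^{\bigtriangledown}}{\bigtriangledown}(h)\,e^f$ (and its analogue for $e^{-(f+th)}$) to rewrite each derivative as $h$ acted upon by an entire function of the modular derivation $\bigtriangledown$.

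Then I use the cyclicity of $\tau$ together with integration by parts to move $\partial_i$ off the $h$-containing factor and to bring $h$ to the leftmost position under the trace. The crucial algebraic step is the identity $\tau(\bigtriangledown(h)\,e^f x) = \tau(h\,e^f(-\bigtriangledown(x)))$, which, when applied termwise to the power series $\frac{1-e^{\bigtriangledown}}{\bigtriangledown}$, converts it into $\frac{e^{-\bigtriangledown}-1}{\bigtriangledown}$ acting on the remaining factor to the right of $h$. Once $h$ sits at the left, the coefficient of $h$ inside $\tau$ is declared by (\ref{gradientderat0}) to equal $\mathrm{Grad}_f\Omega$, and collecting the three contributions per index produces the stated formula (\ref{gradientE-Hfinal}).

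The main obstacle I anticipate is bookkeeping: tracking which $e^{\pm f}$ factors remain rigid and which become absorbed into the entire function of $\bigtriangledown$, and checking that the signs introduced by integration by parts, by cyclicity, and by the conjugation $e^{\bigtriangledown} \leftrightarrow e^{-\bigtriangledown}$ all combine consistently. Provided the three contributions assemble cleanly under $\tau$ in the same pattern as the three original $f$-dependent factors of $\Omega$, the formula (\ref{gradientE-Hfinal}) should follow without any further analytic input.
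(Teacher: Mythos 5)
Your proposal follows essentially the same route as the paper: differentiate $\Omega(f+th)$ under the trace via the Leibniz rule to get three contributions per index, substitute the rearrangement identity $\frac{d}{dt}e^{f+th}=-\frac{1-e^{\bigtriangledown}}{\bigtriangledown}(h)e^{f}$, then use cyclicity, integration by parts, and the identity $\tau(\bigtriangledown(h)e^{f}x)=\tau(he^{f}(-\bigtriangledown(x)))$ to isolate $h$ on the left and read off the gradient from $\tau(h\,\mathrm{Grad}_{f}\Omega)$. This matches the paper's derivation step for step, so no further comment is needed.
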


%...................................................................%
%...................................................................%

\subsection{Computing the scalar curvature in terms of $\delta_{i}(h)$ and $\nabla$}
We find in this part the curvature in (\ref{EHRfinal}) in terms of $\nabla = log \Delta := - ad_{f}$ and $\delta_{i}(f)$. The idea is based on the work of \cite[Eq. (1)]{Fat1}, in which the curvature 
\begin{equation} \label{Farzadcurvature}
R = \pi^{2} \Sigma_{i=1}^{4} \big( - e^{-h} \partial_{i}^{2}(e^{h})e^{-h} + \frac{3}{2} e^{-h} \partial_{i}(e^{h})e^{-h} \partial_{i}(e^{h}) e^{-h} \big),
\end{equation}
 is shown up to a constant to be equal to
\begin{equation} 
R = e^{-f} K(\nabla)(\Sigma_{i=1}^{4} \delta_{i}^{2}f)+ e^{-f} H(\nabla,\nabla)(\Sigma_{i=1}^{4} \delta_{i}f^{2}),
\end{equation}
where $K$ and $H$ are some analytic functions given in (\ref{K}) and (\ref{H}). On the other hand, the scalar curvature  in relation (\ref{EHRfinal}) is given by
\begin{equation} \label{ourcurvature}
R =\Sigma_{i=1}^{4} -3 e^{-f} e^{-f}(\partial_{i}^{2} e^{f}) + \frac{3}{2} e^{-2f}(\partial_{i} e^{f}) e^{-f} (\partial_{i} e^{f}).
\end{equation}
Now consider the identities
\begin{equation} \label{identities}
e^{-f} \delta_{i}(e^{f}) = g_{1}(\Delta)(\delta_{i}f), \hspace{1 cm} e^{-f} \delta_{i}^{2}(e^{f}) = g_{1}(\Delta)(\delta_{i}^{2}f) + 2 g_{2}(\Delta , \Delta) (\delta_{i}(f) \delta_{i}(f)),
\end{equation}
into which 
$$g_{1} (\mu)= \frac{\mu-1}{log \mu}, \hspace{1 cm} g_{2}(\mu, \nu)= \frac{\mu(\nu -1)log \mu - (\mu -1) log \nu}{log \mu log \nu (log \mu) + log \nu) }.$$
By substituting the relations (\ref{identities}) into (\ref{ourcurvature}) we obtain

\begin{equation*}
\begin{split}
R & = \Sigma_{i=1}^{4}\big\{ -3 e^{-f} g_{1}(\Delta)(\delta_{i}^{2}f) - 6  e^{-f}g_{2}(\Delta , \Delta) (\delta_{i}(f)^{2}) + \frac{3}{2} e^{-f}g_{1}(\Delta)(\delta_{i}f)g_{1}(\Delta)(\delta_{i}f) \big\}\\
& = -3 e^{-f} K(\nabla)(\Sigma_{i=1}^{4} \delta_{i}^{2}f) +3 e^{-f} H(\nabla, \nabla)(\Sigma_{i=1}^{4} \delta_{i}(f)^{2}),
\end{split}
\end{equation*}
where 
 \begin{equation} \label{K}
K(s) = \frac{e^{s} -1}{s}, 
\end{equation}
and
\begin{equation} \label{H}
\begin{split}
H(s, t)& = -2 g_{2}(e^{s}, e^{t})+ \frac{1}{2} g_{1}(e^{s}) g_{1}(e^{t}) \\
& = \frac{-2se^{s}(e^{t}-1)+2t (e^{s}-1)}{st(s+t)} +\frac{(e^{s}-1)(e^{t}-1)}{2st}\\
& =\frac{s (e^{t}-1)(-3 e^{s} -1) + t (e^{s}-1)(e^{t} +3)}{2st (s+t)} .
\end{split}
\end{equation}
We summarize the above result in the following proposition.

\begin{proposition} \label{RNablaDelta}
The curvature in \ref{ourcurvature} for noncommutative four-torus with the conformal flat metric given by (\ref{ourcurvature}) has the following form:
\begin{equation} \label{RNablaDelta2}
R= -3 e^{-f} K(\nabla)(\Sigma_{i=1}^{4} \delta_{i}^{2}f) +3 e^{-f} H(\nabla, \nabla)(\Sigma_{i=1}^{4} \delta_{i}(f)^{2}),
\end{equation}
where $K$ and $H$ are given in (\ref{K}) and (\ref{H}).
\end{proposition}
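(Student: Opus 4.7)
The plan is to derive the formula by substituting the modular identities \eqref{identities} into the explicit curvature expression \eqref{ourcurvature} and then collecting the resulting terms according to whether they involve $\delta_i^2 f$ or the quadratic $\delta_i(f)^2$. Throughout, $\partial_i$ and $\delta_i$ agree up to the $2\pi i$ normalization irrelevant for this identity, and $\Delta(a) = e^{-f} a e^f$ is the modular operator with $\nabla = \log \Delta = -\operatorname{ad}_f$.

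First I would establish (or recall from the rearrangement lemma in \cite{Con2,Fat1}) the two identities in \eqref{identities}. These follow from the Duhamel expansion
\[
\delta_i(e^f) = \int_0^1 e^{sf}\,(\delta_i f)\, e^{(1-s)f}\,ds,
\]
which, after multiplying by $e^{-f}$ on the left and conjugating by the flow $a \mapsto e^{-sf} a e^{sf}$, yields $e^{-f}\delta_i(e^f) = g_1(\Delta)(\delta_i f)$ with $g_1(\mu) = (\mu - 1)/\log\mu$. Differentiating the Duhamel formula one more time and applying the same conjugation argument on both integration variables produces the two-variable kernel $g_2$ and the second identity in \eqref{identities}.

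Next I would plug these identities into \eqref{ourcurvature}. The first term $-3 e^{-2f}\partial_i^2 e^f$ is rewritten as $-3 e^{-f}\bigl[g_1(\Delta)(\delta_i^2 f) + 2 g_2(\Delta,\Delta)(\delta_i(f)\delta_i(f))\bigr]$. For the second term $\tfrac{3}{2}e^{-2f}(\partial_i e^f) e^{-f} (\partial_i e^f)$, I would insert the first identity in \eqref{identities} twice, obtaining $\tfrac{3}{2} e^{-f}\bigl[g_1(\Delta)(\delta_i f)\bigr]\bigl[g_1(\Delta)(\delta_i f)\bigr]$. The step that needs care is rewriting this product of two one-variable functional calculi as a single two-variable functional calculus $\tfrac{1}{2} g_1(\Delta)\otimes g_1(\Delta)$ acting on the product $\delta_i(f)\delta_i(f)$; this is achieved by noting that when $\Delta$ acts on the left factor one must shift the right factor through, which is exactly what the second variable of $(\mu,\nu) \mapsto g_1(\mu)g_1(\nu)$ records. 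After summing over $i$, collecting the coefficient of $\Sigma_i \delta_i^2 f$ gives $-3 K(\nabla)$ with $K(s) = g_1(e^s) = (e^s-1)/s$, establishing \eqref{K}. Collecting the coefficient of $\Sigma_i \delta_i(f)^2$ gives $3H(\nabla,\nabla)$ with $H(s,t) = -2 g_2(e^s, e^t) + \tfrac12 g_1(e^s) g_1(e^t)$, and a short algebraic simplification produces the compact form in \eqref{H}.

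The main obstacle I expect is the bookkeeping in the product step: one must correctly identify the two-variable symbol that replaces a product of two unary modular-functional-calculus terms, and one must be confident that the change of variables from $\Delta$ to $\nabla = \log\Delta$ produces the specific functions $K$ and $H$ stated. Everything else is mechanical substitution, integration by parts under $\tau$ having already been used to reach \eqref{ourcurvature} in the first place.
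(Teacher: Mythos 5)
Your proposal is correct and follows essentially the same route as the paper: substitute the two modular identities \eqref{identities} into \eqref{ourcurvature}, recognize the product $g_1(\Delta)(\delta_i f)\,g_1(\Delta)(\delta_i f)$ as a two-variable functional calculus applied to $\delta_i(f)^2$, and read off $K(s)=g_1(e^s)$ and $H(s,t)=-2g_2(e^s,e^t)+\tfrac12 g_1(e^s)g_1(e^t)$. The only difference is that you sketch a Duhamel-expansion derivation of \eqref{identities}, which the paper simply imports from the rearrangement-lemma literature.
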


\subsection{Projections and the scalar curvature}
We plan in this subsection to obtain the scalar curvature in terms of dilatons $f = sp$, where $s \in \mathbb{R}$ and $p= p^{*} = p^{2}$ is a projection. We follow the similar lines in \cite{Con2} and \cite[Section IV]{Fat1}. Consider the conformal flat metric on noncommutative $4-$torus, $g_{ij} = e^{f} \delta_{ij}$, $i,j \in \{1,...,4\}$. The scalar curvature given in (\ref{EHRfinal}) is
\begin{equation*}
R =\Sigma_{i=1}^{4} -3 e^{-2f} (\partial_{i}^{2} e^{f}) + \frac{3}{2} e^{-2f}(\partial_{i} e^{f}) e^{-f} (\partial_{i} e^{f}). 
\end{equation*}
We show that the terms in the right hand side of the identity 
\begin{equation} \label{laplacianidentity}
\bigtriangleup (p) = p \bigtriangleup (p) p + p \bigtriangleup (p) (1-p) +  (1-p) \bigtriangleup (p) p + (1-p) \bigtriangleup (p) (1-p) 
\end{equation} 
are the eigenvalues of the operator $\nabla = -ad_{f}$ corresponding respectively to the eigenvalues $0,s,-s$ and $0$. To see this, we use the identity 
$$\delta_{j}(p) = p \delta_{j}(p) + \delta_{j}(p) p.$$

We will have
$$\nabla (p \bigtriangleup (p) p) = -s\big\{p^{2} \bigtriangleup (p) p-p \bigtriangleup (p) p^{2}   \big\} = 0,$$
so

\begin{equation} \label{eq1}
\begin{split}
\nabla (p \bigtriangleup (p) (1-p)) & =\nabla (p \bigtriangleup (p)) = -s \big\{ p^{2} \bigtriangleup (p)-p \bigtriangleup (p)p   \big\} \\
 & = -s p\bigtriangleup (p) (1-p),
\end{split}
\end{equation}

and similarly for other two cases. So applying the identity \label{laplacianidentity}, it yields

\begin{equation} \label{K(Nabla)}
\begin{split}
K(\nabla)( \bigtriangleup (f)) = & sK(\nabla)( \bigtriangleup (p)) \\
= &  s K(0) (p \bigtriangleup (p) p + (1-p) \bigtriangleup (p) (1-p) )\\
& + sK(-s)(p \bigtriangleup (p) (1-p)) + sK(s)((1-p) \bigtriangleup (p) p)\\
= & s (p \bigtriangleup (p) p + (1-p) \bigtriangleup (p) (1-p) )\\
& +(1-e^{-s})(p \bigtriangleup (p) (1-p)) + (e^{s}-1)((1-p) \bigtriangleup (p) p).\\
\end{split}
\end{equation}

To find a final formula for the scalar curvature, we also need to find similar decomposition for $H(\nabla,\nabla)$. According to \cite[Proposition 4.1]{Fat1} we have the relation

\begin{equation} \label{Hdecomposition}
\begin{split}
& H(\bigtriangledown, \bigtriangledown)(\delta_{i}(f)\delta_{i}(f)) \\
& = \frac{s^{2}}{2} \Big( (H(s,-s)+H(-s,s)) + (H(-s,-s)- H(s,-s))(1-2p)\Big) (\delta_{i}(p)\delta_{i}(p)). 
\end{split}
\end{equation}
The term $H(s,t)$ is denoted in \cite{Fat1} by $-\tilde{H}(t,s)$. By \cite[Theorem 3.1]{Fat1} we have
\begin{equation}
H(s,t)= -2 \frac{K(s+t)-K(t)}{s} - \frac{3}{2} K(t) K(s).
\end{equation} 

So we obtain
\begin{equation} \label{H's}
\begin{split}
H(s,-s)+H(-s,s) & = -\frac{5}{s^{2}} (e^{s}+ e^{-s} -2) ,\\
H(-s,s)- H(s,-s) & = \frac{4s-4sinh s}{s^{2}}.
\end{split}
\end{equation}
which gives
\begin{equation} \label{Hdecomposition2}
\begin{split}
& H(\bigtriangledown, \bigtriangledown)(\delta_{i}(f)\delta_{i}(f)) \\
& = \frac{s^{2}}{2} \Big( \frac{10 (1- cosh s)}{s^{2}} + (\frac{4s-4sinh s}{s^{2}})(1-2p)\Big) (\delta_{i}(p)\delta_{i}(p)) \\
& = \Big( 5 (1- cosh s) + (2 s-2sinh s)(1-2p)\Big) (\delta_{i}(p)\delta_{i}(p)) \\
& = \Big(5 -5coshs+ 2s - 2sinhs + (4s-4sinhs)p \Big) (\delta_{i}(p)\delta_{i}(p)). 
\end{split}
\end{equation}
\\
We also have the identity
\begin{equation} \label{deltasquare}
\Sigma \delta_{i}(p)^{2} = \frac{1}{2} \Big( (1-p)\bigtriangleup(p) -  \bigtriangleup(p)p \Big),
\end{equation}
that substituting into (\ref{Hdecomposition2}) gives
\begin{equation} \label{Hdecomposition3}
\begin{split}
& \frac{1}{2} H(\bigtriangledown, \bigtriangledown)((1-p)\bigtriangleup(p) -  \bigtriangleup(p)p) \\
& =(\frac{5}{2} -\frac{5}{2} \cosh s+ s - \sinh s) \bigtriangleup(p) \\
&- (\frac{5}{2} -\frac{5}{2} coshs+ s - sinhs) p \bigtriangleup(p) \\
&- (\frac{5}{2} -\frac{5}{2} coshs+ s - sinhs) \bigtriangleup(p)p \\
&-(2s-2sinhs)p \bigtriangleup(p)p .
\end{split}
\end{equation}
Now by substituting the relations (\ref{K(Nabla)}) and (\ref{Hdecomposition3}) into the relation  (\ref{RNablaDelta2}) we obtain the following expression for the curvature for the dilaton $f= sp$
\begin{equation}  \label{Rdilaton}
\begin{split}
\frac{1}{3}e^{sp} R = & -s \bigtriangleup(p)+ (-1+s+e^{-s}) p\bigtriangleup(p) +(1+s- e^{s})  \bigtriangleup(p) p \\
& +(-2s+2sinhs)p \bigtriangleup(p) p  +(\frac{5}{2} -\frac{5}{2} coshs+ s - sinhs) \bigtriangleup(p) \\
& - (\frac{5}{2} -\frac{5}{2} coshs+ s - sinhs) p \bigtriangleup(p) \\
&- (\frac{5}{2} -\frac{5}{2} coshs+ s - sinhs) \bigtriangleup(p)p 
-(2s-2sinhs)p \bigtriangleup(p)p \\
= &(\frac{5}{2} -\frac{5}{2} coshs - sinhs) \bigtriangleup(p)
+ (-\frac{7}{2} + e^{-s}+\frac{5}{2} coshs + sinhs)  p \bigtriangleup(p)  \\
& + (-\frac{3}{2} - e^{s}+\frac{5}{2} coshs + sinhs ) \bigtriangleup(p) p   + (-4s+4sinhs)  p \bigtriangleup(p) p.
\end{split}
\end{equation}

%...........................................................
%...........................................................

\subsection{New proof of the non-positivity of E-H action}
It is shown in \cite [p. 6]{Fat1} that up to a constant the curvature %(\ref{Farzadcurvature}) 
\begin{equation} \label{Farzadcurvature}
R = \pi^{2} \Sigma_{i=1}^{4} \big( - e^{-h} \partial_{i}^{2}(e^{h})e^{-h} + \frac{3}{2} e^{-h} \partial_{i}(e^{h})e^{-h} \partial_{i}(e^{h}) e^{-h} \big)
\end{equation}
is equal to the curvature obtained by Fathizadeh and Khalkhali in \cite{Mas3}. They have shown in \cite[Section 5]{Mas3}, that $\tau(R) \leq 0$ and that the extremal case occurs for a constant multiple of the flat metric. They applied definitions $\nabla$ and $\Delta$ and used the positivity of some analytic functions in their proof. We prove in the following their result by the method we used in our proof for the nonpositivity of E-H action in \ref{E-Himproved}.

\begin{proposition} \label{MasoudFarzadEH}
Consider the spectral triple on noncommutative torus given in \cite{Mas3} and \cite{Fat1}. Consider also its associated scalar curvature given in the formula (\ref{Farzadcurvature}). Then the Einstein-Hilbert action functional for the scalar curvature, $\tau(R)$, is non-positive and attains its extremum for a constant multiple of the flat metric.
\end{proposition}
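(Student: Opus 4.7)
The plan is to replicate the strategy of Proposition \ref{E-Himproved}: simplify $\tau(R)$ via cyclicity of the trace and integration by parts (using $\tau\circ\partial_i = 0$), until the result is manifestly a non-positive multiple of a term of the form $\tau(Z^{*}Z)$.

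First, I would apply $\tau$ to the curvature formula (\ref{Farzadcurvature}) term by term. Using $\tau(e^{-h}Xe^{-h})=\tau(e^{-2h}X)$, this becomes
\begin{equation*}
\frac{\tau(R)}{\pi^{2}}=\sum_{i=1}^{4}\Bigl\{-\tau\bigl(e^{-2h}\partial_i^{2}(e^{h})\bigr)+\tfrac{3}{2}\,\tau\bigl(e^{-2h}\partial_i(e^{h})e^{-h}\partial_i(e^{h})\bigr)\Bigr\}.
\end{equation*}
Next I would dispose of the second derivative by integrating by parts, $\tau(A\partial_i(B))=-\tau(\partial_i(A)B)$, to get $-\tau(e^{-2h}\partial_i^{2}(e^{h}))=\tau(\partial_i(e^{-2h})\partial_i(e^{h}))$. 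The identity $\partial_i(e^{-h})=-e^{-h}\partial_i(e^{h})e^{-h}$, obtained by differentiating $e^{-h}\!\cdot\! e^{h}=1$, combined with the Leibniz rule yields
\begin{equation*}
\partial_i(e^{-2h})=-e^{-h}\partial_i(e^{h})e^{-2h}-e^{-2h}\partial_i(e^{h})e^{-h}.
\end{equation*}
A 4-cycle shows the two resulting traces are equal, producing $-2\,\tau(e^{-2h}\partial_i(e^{h})e^{-h}\partial_i(e^{h}))$ in total.

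Combining with the second term, the coefficients combine as $-2+\tfrac{3}{2}=-\tfrac{1}{2}$, giving
\begin{equation*}
\tau(R)=-\frac{\pi^{2}}{2}\sum_{i=1}^{4}\tau\bigl(e^{-2h}\partial_i(e^{h})e^{-h}\partial_i(e^{h})\bigr).
\end{equation*}
Setting $Z_i=e^{-h}\partial_i(e^{h})e^{-h/2}$ and using the $*$-property of $\partial_i$ on self-adjoint $h$ from \cite{Mas3,Fat1} (so $\partial_i(e^{h})$ behaves appropriately under adjoint), a direct check by cyclicity identifies $\tau(e^{-2h}\partial_i(e^{h})e^{-h}\partial_i(e^{h}))$ with $\tau(Z_i^{*}Z_i)$, which is non-negative. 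Hence $\tau(R)\le 0$. For the extremum, $\tau(R)=0$ forces every $\tau(Z_i^{*}Z_i)=0$, hence $Z_i=0$, and since $e^{-h}$ is invertible this gives $\partial_i(e^{h})=0$ for $i=1,\dots,4$. So $e^{h}$ is central/constant, i.e., the metric is a constant multiple of the flat one.

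The main obstacle I expect is the careful bookkeeping in the noncommutative setting: verifying that the two pieces of $\partial_i(e^{-2h})$ really do coincide under trace-cyclicity (rather than combining to something messier), and choosing the right $Z_i$ so that $\tau(Z_i^{*}Z_i)$ reproduces the specific ordering $e^{-2h}\partial_i(e^{h})e^{-h}\partial_i(e^{h})$ exactly after the cycle. Once the coefficient cancellation $-2+\tfrac{3}{2}=-\tfrac{1}{2}$ is in hand, the rest reduces to positivity of the trace in the spirit of the proof of Proposition~\ref{E-Himproved}.
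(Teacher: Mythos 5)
Your proposal is correct and follows the same route as the paper's own proof: pass to the trace, eliminate the second--derivative term by integration by parts together with the Leibniz rule applied to $\partial_i(e^{-2h})$, combine the coefficients, and exhibit the surviving term as a trace of the form $\tau(Z^{*}Z)$. The one place where your bookkeeping and the paper's diverge is the sign of the integrated term: you correctly obtain $-\tau\bigl(e^{-2h}\partial_i^{2}(e^{h})\bigr)=-2\,\tau\bigl(e^{-2h}\partial_i(e^{h})e^{-h}\partial_i(e^{h})\bigr)$ and hence the overall coefficient $-2+\tfrac{3}{2}=-\tfrac{1}{2}$, whereas the computation in (\ref{MFsimplifiedterm}) drops a minus sign when substituting $\delta_i(e^{-h})=-e^{-h}\delta_i(e^{h})e^{-h}$, arrives at the coefficient $\tfrac{7}{2}$, and then recovers non-positivity by invoking $(\delta_i a)^{*}=-\delta_i(a^{*})$. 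Your version is the internally consistent one for the $*$-derivations of Section 2, for which $\delta_i(e^{h})^{*}=\delta_i(e^{h})$ when $h=h^{*}$: with $Z_i=e^{-h}\partial_i(e^{h})e^{-h/2}$ one has $\tau(Z_i^{*}Z_i)=\tau\bigl(e^{-2h}\partial_i(e^{h})e^{-h}\partial_i(e^{h})\bigr)$ by cyclicity, and your coefficient $-\tfrac{\pi^{2}}{2}$ reproduces the commutative limit $-\tfrac{\pi^{2}}{2}\int e^{-h}\lvert\nabla h\rvert^{2}\le 0$. The only point to tighten is the phrase ``behaves appropriately under adjoint'': the sign of the whole action hinges on whether $\partial_i(e^{h})$ is self-adjoint or anti-self-adjoint, so the convention must be stated explicitly --- under the opposite normalization the identical algebra would yield $\tau(R)\ge 0$. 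Your extremum argument (faithfulness of $\tau$ plus invertibility of $e^{-h/2}$ forcing $\partial_i(e^{h})=0$) matches the paper's.
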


\begin{proof}
Consider the the trace for the scalar curvature in (\ref{Farzadcurvature})

\begin{equation} \label{MFEHtrace}
\begin{split}
\frac{1}{\pi^{2}} \tau(R) & = \Sigma_{i=1}^{4} \tau \big(- e^{-h} \delta_{i}^{2}(e^{h})e^{-h} + \frac{3}{2} e^{-h} \delta_{i}(e^{h})e^{-h} \delta_{i}(e^{h}) e^{-h} \big) \\
 & = \Sigma_{i=1}^{4}\big\{ -\tau \big( e^{-2h} \delta_{i}^{2}(e^{h}) \big) +\frac{3}{2} \tau\big( e^{-2h} \delta_{i}(e^{h})e^{-h} \delta_{i}(e^{h})\big) \big\} ,
\end{split}
\end{equation}

where for the first term in (\ref{MFEHtrace}) we have 

\begin{equation} \label{MFsimplifiedterm}
\begin{split}
\tau \big( e^{-2h} \delta_{i}^{2}(e^{h}) \big) & = - \tau \big( \delta_{i} (e^{-2h}) \delta_{i}(e^{h}) \big) \\
& =  - \tau \big(e^{-h} \delta_{i} (e^{-h}) \delta_{i}(e^{h}) \big) - \tau \big( \delta_{i} (e^{-h}) e^{-h} \delta_{i}(e^{h}) \big) \\
& = -\tau \big(e^{-2h} \delta_{i} (e^{h}) e^{-h} \delta_{i}(e^{h}) \big) - \tau \big(e^{-h} \delta_{i} (e^{h}) e^{-2h} \delta_{i}(e^{h}) \big)\\
& = -2\tau \big(e^{-2h} \delta_{i} (e^{h}) e^{-h} \delta_{i}(e^{h}) \big)
\end{split}
\end{equation}

Substituting (\ref{MFsimplifiedterm}) into (\ref{MFEHtrace}) gives

\begin{equation*} 
\begin{split}
\frac{1}{\pi^{2}} \tau(R) & =\Sigma_{i=1}^{4}\big\{ 2  \tau \big(e^{-2h} \delta_{i} (e^{h}) e^{-h} \delta_{i}(e^{h}) \big) + \frac{3}{2} \tau \big(e^{-2h} \delta_{i} (e^{h}) e^{-h} \delta_{i}(e^{h}) \big)  \big\}\\
& = \frac{7}{2}\Sigma_{i=1}^{4} \tau \big(e^{-2h} \delta_{i} (e^{h}) e^{-h} \delta_{i}(e^{h}) \big) \\
& = - \frac{7}{2}\Sigma_{i=1}^{4} \tau \big(e^{-2h} \big( \delta_{i} (e^{h}) e^{- \frac{h}{2}} \big) \big( \delta_{i}(e^{h}) e^{- \frac{h}{2}} \big)^{*} \big)  \leq 0.
\end{split}
\end{equation*}

And the extremum case occurs only when 

\begin{equation} \label{eq1}
\begin{split}
& e^{-2h} \big(\delta_{i} (e^{h}) e^{- \frac{h}{2}} \big) \big( \delta_{i}(e^{h}) e^{- \frac{h}{2}} \big)^{*}= 0, \\
 & \delta_{i}(e^{h}) e^{- \frac{h}{2}}=0,\\
 & \delta_{i}(e^{h})=0, i \in \{1,...,4 \},
\end{split}
\end{equation} 
so $e^{h}$ is constant and this proves the proposition.
\end{proof}

%.................................................................
%.................................................................

\section{On the Gauss-Bonnet theorem} \label{SecGB}
 
Consider the conformal flat metric 
\begin{equation} \label{conformametrictwodim}
\begin{pmatrix}
e^{f} & 0 \\
0 & e^{f} \\
\end{pmatrix}
\end{equation}
for a self-adjoint element $f \in A_{\theta}^{\infty}$. The scalar curvature computed in \cite{Ros1} is
\begin{equation*} \label{NCR1212}
R_{1212}= - \frac{1}{2}  \big\{ \partial_{2}(\partial_{2}(e^{f}) e^{-f})  + \partial_{1}(\partial_{1}(e^{f}) e^{-f}) \big\} e^{f},
\end{equation*}
so
\begin{equation} \label{NCR}
R = 2e^{-2f}R_{1212}=  - e^{-2f}  \big\{ \partial_{2}(\partial_{2}(e^{f}) e^{-f})  + \partial_{1}(\partial_{1}(e^{f}) e^{-f}) \big\} e^{f}.
\end{equation}
From Proposition \ref{G-B} we have the Gauss-Bonnet theorem, $\tau(R \sqrt{detg})=0$.
\\
%...................................................................%
%...................................................................%

The metrics that have been usually worked on were conformally  flat, \cite{Con3, Mas1}. Now consider the non-conformal flat metric
$ \begin{pmatrix}
e^{f} & 0 \\
0 & 1 \\
\end{pmatrix} $ 
which is similar to the metric introduced in \cite{Dab1}. Its scalar curvature by definition is 

\begin{equation} \label{eq1}
\begin{split}
R = g^{11} R_{11} + g^{22}R_{22} & = g^{11}g^{22} R_{1212} + g^{22}g^{11}R_{2121} \\
& = e^{-f}(R_{1212} + R_{2121}),
\end{split}
\end{equation}

where we have 
$$R_{1212} = - \frac{1}{2} \partial_{2}^{2} e^{f} + \frac{1}{4} \partial_{2}e^{f} e^{-f} \partial_{2}e^{f},$$
and 
$$R_{2121} = - \frac{1}{2} \partial_{2} \big( (\partial_{2}e^{f}) e^{-f}\big) e^{f} - \frac{1}{4} \partial_{2}e^{f} e^{-f} \partial_{2}e^{f}.$$
So
\begin{equation} \label{scalarcurvature}
R =  e^{-f} \big\{ - \frac{1}{2} \partial_{2}^{2} e^{f}  - \frac{1}{2} \partial_{2} \big( (\partial_{2}e^{f}) e^{-f}\big) e^{f} \big\}.
\end{equation}

This implies
\begin{equation} \label{GBnondiagonal}
\begin{split}
\tau (R \sqrt{g}) & = \tau(R e^{\frac{f}{2}})  \\
 & = - \frac{1}{2} \tau \big( e^{-\frac{f}{2}}    \partial_{2}^{2} e^{f} +e^{\frac{f}{2}}  \partial_{2} \big( (\partial_{2}e^{f}) e^{-f}\big) \big)\\
 & =\frac{1}{2} \tau\big( (\partial_{2}e^{-\frac{f}{2}}) (\partial_{2}e^{f}) + (\partial_{2}e^{\frac{f}{2}})(\partial_{2}e^{f}) e^{-f} \big).
\end{split}
\end{equation}

\begin{remark} \label{nonsymmetrictensorremark}
In relation (\ref{scalarcurvature}), we clearly see that in spite of the classical case we do not have the equality $R_{1212} = R_{2121}$. So we note that we do not have the anti-symmetry property $R_{ijkl}= -R_{ijlk}$ in general, cf. \cite[Proposition 3.4]{Ros1}.
\end{remark}

%...................................................................%
%...................................................................%
\subsection{Commutativity of metric element with partial derivatives}

We obtained above the scalar curvature in (\ref{scalarcurvature}) for the metric
$ \begin{pmatrix}
e^{f} & 0 \\
0 & 1 \\
\end{pmatrix} $
as
$$R =  e^{-f} \big\{ - \frac{1}{2} \partial_{2}^{2} e^{f}  - \frac{1}{2} \partial_{2} \big( (\partial_{2}e^{f}) e^{-f}\big) e^{f} \big\}.$$
To check the Gauss-Bonnet theorem for this metric, we add an additional assumption of commutativity of $f$ with its partial derivatives $\partial_{1}$ and $\partial_{2}$ and so the relation $\partial_{2}^{2}e^{f}= \big(  \partial_{2}^{2}f + (\partial_{2}f)^{2} \big) e^{f}$ to reach to 
\begin{equation} \label{scalarcommutativity}
R = \big\{ -  \partial_{2}^{2}f - \frac{1}{2}(\partial_{2}f)^{2} \big\}.
\end{equation}

The scalar curvature in (\ref{scalarcommutativity}) coincides with the classical case when $\theta = 0$,(it can be computed by hand or checked by Mathematica package). We choose in continue a specific self-adjoint element in $A_{\theta}^{\infty}$ and use its Fourier expansion to reach to the same series as in the classical case where the Riemannian integral of the curvature vanishes. According to the relation (\ref{scalarcommutativity} we have
\begin{equation} \label{G-Btau}
\tau (R \sqrt{detg}) = \tau  \big(  e^{\frac{f}{2}}  \big\{ -  \partial_{2}^{2}f - \frac{1}{2} (\partial_{2}f)^{2} \big\}\big).
\end{equation}
\\
We show in the following the relation (\ref{G-Btau}) for the self-adjoint element $f=2( V+V^{-1})$ vanishes. 
\begin{proposition} \label{G-Bsecondmetric}
Consider the self-adjoint element $f= 2(V+V^{-1})$ in the noncommutative two-torus $A_{\theta}$ and the metric 
$\begin{pmatrix}
e^{f} & 0 \\
0 & 1 \\
\end{pmatrix}$
on the vector bundle $X_{\theta}$. The associated scalar curvature (\ref{scalarcommutativity}) satisfies in the Gauss-Bonnet theorem $\tau(R \sqrt{detg}) = \tau(R e^{f/2})=0$.
\end{proposition}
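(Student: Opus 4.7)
My plan is to exploit the crucial fact that $f = 2(V + V^{-1})$ lies in the commutative $\ast$-subalgebra of $A_\theta^\infty$ generated by the single unitary $V$, so that the entire Gauss–Bonnet integrand lives in one Laurent-polynomial algebra (or its rapidly-decreasing completion) and the computation collapses to a classical one-variable Fourier identity. Throughout I take $V$ to be the generator on which $\partial_2$ acts nontrivially, say $V = U_2$ with $\partial_2 V = 2\pi i V$.

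The first step is to verify the hypothesis of the previous subsection, namely that $f$ commutes with its partial derivatives. This is automatic, since $f$, $\partial_1 f = 0$, and $\partial_2 f$ are all polynomials in the single variable $V$. Hence formula (\ref{scalarcommutativity}) applies, giving
$$R = -\partial_2^2 f - \tfrac{1}{2}(\partial_2 f)^2.$$
A direct calculation yields $\partial_2 f = 4\pi i\,(V - V^{-1})$ and $\partial_2^2 f = -4\pi^2 f$, whence
$$R = 8\pi^2\bigl[(V + V^{-1}) + (V^2 + V^{-2}) - 2\bigr],$$
an explicit Laurent polynomial in $V$.

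The second step is to expand $e^{f/2} = e^{V + V^{-1}}$ via the classical generating identity
$$\exp\!\bigl(\tfrac{z}{2}(t + t^{-1})\bigr) = \sum_{m \in \mathbb{Z}} I_m(z)\, t^m$$
for modified Bessel functions of the first kind, evaluated at $z = 2$ and $t = V$. Multiplying the Laurent polynomial $R$ by the resulting series, extracting the coefficient of $V^0$ via $\tau(V^k) = \delta_{k,0}$, and using $I_{-m}(2) = I_m(2)$, I expect to obtain the explicit expression
$$\tau(R\, e^{f/2}) = 16\pi^2\bigl[I_1(2) + I_2(2) - I_0(2)\bigr].$$

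The final step invokes the three-term Bessel recurrence $I_{n-1}(z) - I_{n+1}(z) = (2n/z)\, I_n(z)$ at $n = 1$, $z = 2$, which gives $I_0(2) = I_1(2) + I_2(2)$ and thus makes the bracket vanish. The only real obstacle is the arithmetic bookkeeping of signs and $2\pi$ factors; conceptually the point is that restricting $f$ to a commutative $\ast$-subalgebra reduces the noncommutative Gauss–Bonnet integral to the classical Gauss–Bonnet integral for the corresponding metric on an ordinary $2$-torus, whose vanishing is precisely the content of the Bessel identity above.
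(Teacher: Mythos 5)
Your proof is correct and follows essentially the same route as the paper: both expand $e^{V+V^{-1}}$ as a Laurent series in $V$, extract the coefficients of $V^{0},V^{\pm1},V^{\pm2}$ under the trace, and verify that $I_0(2)-I_1(2)-I_2(2)=0$. Your packaging of the coefficients as modified Bessel functions and appeal to the three-term recurrence is just a cleaner restatement of the paper's explicit telescoping of the factorial series $\sum_q \frac{1}{q!(q+m)!}$, and your closing remark about reduction to the classical integral matches the paper's second argument via residues.
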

\begin{proof}
We have
\begin{equation} \label{examplecurvature}
\frac{1}{2} R \sqrt{detg} =  e^{V+V^{-1}}\big(  2 - V- V^{-1} - V^{2} -V^{-2}   \big).
\end{equation}
So to find the trace of the curvature in relation (\ref{examplecurvature}), we should find the coefficients of $V$, $V^{-1}$ , $V^{2}$, $V^{-2}$ and the constant term of the exponential $e^{V+V^{-1}}$. We need to find the expansion of the exponential term
\begin{equation} \label{exp}
 e^{V+V^{-1}} = \Sigma_{k=0}^{\infty} \frac{(V+V^{-1})^{q}}{q!}.
\end{equation} 
Since $V$ and $V^{-1}$ commute with each other, we have the binomial expansion
\begin{equation} \label{binomial}
 (V+V^{-1})^{q} = \Sigma_{k=0}^{q}V^{k} V^{k-q}\left( \begin{array}{ccc} 
q \\
k
\end{array} \right).
\end{equation}
To get the coefficient of $V$ in relations (\ref{exp}) and (\ref{binomial}), put 
$$2k-q=1 \rightarrow k= \frac{q+1}{2}.$$
So for each integer $\frac{q+1}{2}$ we get the coefficient of $V$ in binomial expansion as
 $\frac{ \binom{q}{\frac{q+1}{2}}}{q!}$
that sums all to 
$$\Sigma_{\frac{q+1}{2} integer}\frac{ \binom{q}{\frac{q+1}{2} } }{(q)!},$$    
and the same for $V^{-1}$. By a change of variable, we obtain the coefficients of $V$ and $V^{-1}$ as
$$\Sigma_{q=0}^{\infty}\frac{\binom{2q+1}{q}}{(2q+1)!}.$$
\\
Doing the same combinatorics give the coefficients of $V^{2}$ and $V^{-2}$ as 
 $$\Sigma_{q=0}^{\infty}\frac{ \left( \begin{array}{ccc} 
2q+2 \\
q
\end{array} \right)}{(2q+2)!},$$
 and the constant term of the exponential is
 
 \begin{equation*}
 \Sigma_{q=0}^{\infty}\frac{ \left( \begin{array}{ccc} 
2q \\
q
\end{array} \right)}{(2q)!}.
\end{equation*}
So the trace in relations (\ref{G-Btau}) and (\ref{examplecurvature}) becomes

\begin{equation*}
\begin{split}
\frac{1}{2} \tau(R \sqrt{det g}) & = 2 \bigg( \Sigma_{q=0}^{\infty}\frac{ \left( \begin{array}{ccc} 
2q \\
q
\end{array} \right)}{(2q)!} 
-\Sigma_{q=0}^{\infty}\frac{ \left( \begin{array}{ccc} 
2q+1 \\
q
\end{array} \right)}{(2q+1)!}
-\Sigma_{q=0}^{\infty}\frac{ \left( \begin{array}{ccc} 
2q+2 \\
q
\end{array} \right)}{(2q+2)!}
\bigg) \\
& =2 \bigg( \Sigma_{q=0}^{\infty}\frac{ 1}{q! q!}
  -\Sigma_{q=0}^{\infty}\frac{ 1}{q! (q+1)!}
 -\Sigma_{q=0}^{\infty}\frac{ 1}{q! (q+2)!} \bigg),
\end{split}
\end{equation*}
and by telescoping sum
\begin{equation} \label{finalseries}
= \Sigma_{q=0}^{\infty}\frac{ q^{2} +2q -1}{q! (q+2)!}.
\end{equation}
\\
Separating the series (\ref{finalseries}) into two parts gives

\begin{equation*} 
\begin{split}
\Sigma_{q=0}^{\infty}\frac{ q^{2} +2q -1}{q! (q+2)!} & = \Sigma_{q=0}^{\infty}\frac{ q^{2} +2q}{q! (q+2)!} - \Sigma_{q=0}^{\infty}\frac{1}{q! (q+2)!} \\
& =\Sigma_{q=1}^{\infty}\frac{ 1}{(q-1)! (q+1)!} - \Sigma_{q=0}^{\infty}\frac{1}{q! (q+2)!}\\
& =\Sigma_{q=0}^{\infty}\frac{1}{q! (q+2)!} - \Sigma_{q=0}^{\infty}\frac{1}{q! (q+2)!}\\
& =0. 
\end{split}
\end{equation*}
\end{proof}
 
 In addition to this proof, integrating the final formula of the curvature in the classical case confirms that the integral will be zero. This consists of the idea of our second proof that will be explained bellow.

The computations above from the relation (\ref{G-Btau}) to (\ref{finalseries}) showed that $\tau(R e^{f/2})$ is equal to the series in (\ref{finalseries}). So we should prove that this series vanish. If we put $h= 2(V + V^{-1}) = 2(e^{iy} +  e^{-iy})$, then we have the classical Gauss-Bonnet integral that is zero as the Euler characteristic for commutative torus is vanishes. If we show that this classical integral is exactly the same as series (\ref{finalseries}), we have shown the series (\ref{finalseries}) vanish. The Gauss-Bonnet theorem for the metric 
$$g=
\begin{pmatrix}
e^{h(y)} & 0 \\
0 & 1  \\
\end{pmatrix} 
$$
asserts that the integral

\begin{equation*}
\begin{split}
& \int_{\Pi^{2}} e^{\frac{h}{2}} \big \{  -\frac{1}{2} h''(y) - \frac{1}{4} (h'(y))^{2}   \big \}dxdy \\
= & 2 \pi \int_{\Pi} e^{\frac{h}{2}} \big \{  -\frac{1}{2} h''(y) - \frac{1}{4} (h'(y))^{2} \big \}dy
\end{split}
\end{equation*}
should be zero. We choose $h= 2(e^{iy} +  e^{-iy})$ and change the variable on the complex circle $e^{iy} \longrightarrow z$ to reach to
\\
\begin{equation*}
\begin{split}
& 2 \pi i \int_{|z|=1} e^{z+z^{-1}}  \big \{  -(z+z^{-1}) - (z-z^{-1})^{2} \big \} z^{-1}dz\\
 = & 2 \pi i \int_{|z|=1} e^{z+z^{-1}}  \big \{  -z-z^{-1}- z^{2}-z^{-2} + 2 \big \}z^{-1}dz.
\end{split}
\end{equation*}

The integrand has singular point only at $z=0$. Now due to the existence of coefficient $z^{-1}$, it will suffice to compute the constant term of the Laurent serries  of $e^{z+z^{-1}}$, $e^{z+z^{-1}}z$, $e^{z+z^{-1}}z^{-1}$,$e^{z+z^{-1}}z^{2}$ and $e^{z+z^{-1}}z^{-2}$.  We can then apply the residue formula $a_{-1} = \frac{1}{2 \pi i} \int f(z)dz$ to extract the integral value. A bit consideration reveals that these coefficients are obtained in the same lines of noncommutative computations in (\ref{G-Btau}) to  (\ref{finalseries}). So the resulting coefficients match with the series (\ref{finalseries}) which completes the proof.
\\
The main restriction in  Proposition \ref{G-Bsecondmetric} is that $f$ was chosen to be only a specific element $2(V+ V^{-1}) \in A_{\theta}^{\infty}$. However, we hope this point be useful when we relax the condition of commutativity of $f$ with its partial derivatives. In the following, we give a simple proof of the Gauss-Bonnet theorem of Proposition \ref{G-Bsecondmetric} for a self-adjoint element $f \in A_{\Theta}^{\infty}$ commuting with its partial derivatives.  
  
\begin{proposition} \label{G-Barbitraryf}
Given the metric
$\begin{pmatrix}
e^{f} & 0 \\
0 & 1 \\
\end{pmatrix}$
on the noncommutative two-torus $A_{\theta}$ where $f =f^{*}$ commutes with its partial derivatives, the Gauss-Bonnet theorem holds for the associated scalar curvature, $\tau(R \sqrt{detg}) = \tau(R e^{f/2})=0$
\end{proposition}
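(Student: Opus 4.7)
The plan is to start from the simplified scalar curvature formula (\ref{scalarcommutativity}) and apply integration by parts under $\tau$, exploiting the commutativity hypothesis to push all calculations to the level of the single variable $f$. Concretely, since $f$ commutes with $\partial_1 f$ and $\partial_2 f$, the usual chain rules $\partial_2 e^{\alpha f} = \alpha (\partial_2 f) e^{\alpha f}$ and $\partial_2^2 e^{f} = \bigl(\partial_2^2 f + (\partial_2 f)^2\bigr)e^{f}$ are valid, so the derivation of (\ref{scalarcommutativity}) from (\ref{scalarcurvature}) applies and gives
\begin{equation*}
R = -\partial_2^2 f \;-\; \tfrac{1}{2}(\partial_2 f)^2.
\end{equation*}

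Next I would compute $\tau(R\sqrt{\det g}) = \tau(R\, e^{f/2})$ by treating the two summands separately. On the first summand, integration by parts for $\tau$ (that is, $\tau(\partial_2 a) = 0$) yields
\begin{equation*}
-\tau\bigl((\partial_2^2 f)\, e^{f/2}\bigr) \;=\; \tau\bigl((\partial_2 f)\,\partial_2(e^{f/2})\bigr) \;=\; \tfrac{1}{2}\,\tau\bigl((\partial_2 f)^2\, e^{f/2}\bigr),
\end{equation*}
where the last equality uses $\partial_2(e^{f/2}) = \tfrac{1}{2}(\partial_2 f)e^{f/2}$, which in turn depends only on the commutativity of $f$ with $\partial_2 f$. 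Adding the contribution of the second summand $-\tfrac{1}{2}\tau\bigl((\partial_2 f)^2 e^{f/2}\bigr)$ gives immediate cancellation and hence $\tau(R\, e^{f/2})=0$.

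There is essentially no obstacle here beyond verifying that the commutativity hypothesis genuinely licenses both the chain rules used to simplify (\ref{scalarcurvature}) into (\ref{scalarcommutativity}) and the identification $\partial_2(e^{f/2}) = \tfrac{1}{2}(\partial_2 f)e^{f/2}$ used in the integration by parts; the tracial integration by parts itself is a standard property of $\tau$ and does not require commutativity. The argument is the direct noncommutative transcription of the classical Gauss--Bonnet cancellation, and it explains why the specific example $f = 2(V+V^{-1})$ treated by complex residues in Proposition~\ref{G-Bsecondmetric} was bound to yield the vanishing series (\ref{finalseries}): that $f$ commutes with $\partial_1, \partial_2$, so Proposition~\ref{G-Barbitraryf} already covers it.
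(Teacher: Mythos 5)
Your proof is correct and follows essentially the same route as the paper's: both reduce $\tau(R e^{f/2})$ to a cancellation between $-\tau\bigl((\partial_2^2 f)e^{f/2}\bigr)$ and $-\tfrac12\tau\bigl((\partial_2 f)^2 e^{f/2}\bigr)$ via tracial integration by parts together with the chain rule $\partial_2(e^{f/2})=\tfrac12(\partial_2 f)e^{f/2}$, which the commutativity hypothesis licenses. Your version merely spells out the intermediate step $\tau\bigl((\partial_2 f)\,\partial_2(e^{f/2})\bigr)$ that the paper compresses into one line.
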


\begin{proof}
The tracial property of $\tau$ gives $\tau(e^{f/2} \partial^{2}f)= -\frac{1}{2} \tau(e^{f/2} (\partial_{2}f)^{2})$. So we will have

\begin{equation*}
\begin{split}
\tau (R \sqrt{detg}) & = \tau  \big(  e^{\frac{f}{2}}  \big\{ -  \partial_{2}^{2}f - \frac{1}{2} (\partial_{2}f)^{2} \big\}\big) \\
 & = \frac{1}{2} \tau(e^{f/2} (\partial_{2}f)^{2}) - \frac{1}{2} \tau(e^{f/2} (\partial_{2}f)^{2}) \\
 & =0.
\end{split}
\end{equation*}

\end{proof}

\begin{remark}
There is similar result in \cite{Pet1} on Proposition \ref{G-Barbitraryf} for a diagonal metric 
$\begin{pmatrix}
a & 0 \\
0 & b \\
\end{pmatrix}$
where the positive elements $a$ and $b$ commute with their partial derivatives, but not necessarily commute with each other. The other condition in that paper is that the Gauss-Bonnet theorem holds when $ab=1$. We consider a generalization of this metric in the next section.  
\end{remark}

In the following, we find all the self-adjoint elements in $A_{\Theta}^{\infty}$ which commute with their partial derivatives in addition to the central elements $Z(A_{\theta}^{\infty}) \simeq \mathbb{C}$.  

\begin{proposition}
The self-adjoint elements $\Sigma_{m \in \mathbb{Z}} (a_{mm}U^{m}V^{m} + \overline{a_{mm}} V^{-m}U^{-m})$, $ a_{mn}U^{m}V^{n} +\overline{a_{mn}}V^{-n}U^{-m}$ and the elements 
$$a_{mn}U^{m}V^{n} +\overline{a_{mn}}V^{-n}U^{-m} +  a_{kl}U^{k}V^{l} +\overline{a_{kl}}V^{-l}U^{-k}$$ 
in $A_{\theta}^{\infty}$ with the condition $kn=ml$ commute with their partial derivatives.
\end{proposition}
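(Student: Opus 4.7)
The plan is to verify $[f,\delta_j(f)]=0$ for $j=1,2$ in each of the three families by direct computation, reducing everything to the basic product rule
\[
U^a V^b U^c V^d = e^{-2\pi i\theta bc}\,U^{a+c}V^{b+d},
\]
which immediately yields the monomial commutator
\[
[U^mV^n,\,U^kV^l] = \bigl(e^{-2\pi i\theta nk}-e^{-2\pi i\theta lm}\bigr)U^{m+k}V^{n+l}.
\]
For generic $\theta$ this vanishes precisely when $nk=lm$. The guiding principle throughout is to show that the monomial building blocks of each $f$ span an abelian subalgebra of $A_\Theta^\infty$; since $\delta_1$ and $\delta_2$ act diagonally on monomials ($\delta_1(U^mV^n)=2\pi im\,U^mV^n$ and $\delta_2(U^mV^n)=2\pi in\,U^mV^n$), the derivatives $\delta_j(f)$ stay inside the same abelian subalgebra, so $[f,\delta_j(f)]=0$ is automatic.

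For the first family, note that any two monomials of the form $U^mV^m$ and $U^kV^k$ commute, since $mk=km$ makes the commutator formula vanish. Together with $U^mV^m\cdot V^{-m}U^{-m}=V^{-m}U^{-m}\cdot U^mV^m=1$, this shows all the building blocks $\{U^mV^m,\,V^{-m}U^{-m}\}_{m\in\mathbb{Z}}$ pairwise commute. For the second family, setting $A=U^mV^n$ writes $f=aA+\bar a A^{*}$ with $a=a_{mn}$; unitarity gives $AA^{*}=A^{*}A=1$, so $A$ and $A^{*}$ commute, and $\delta_1(f)=2\pi im(aA-\bar a A^{*})$, $\delta_2(f)=2\pi in(aA-\bar a A^{*})$ live in the abelian algebra generated by $A$ and $A^{*}$.

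The substantive case is the third family, where $f=aA+\bar a A^{*}+bC+\bar b C^{*}$ with $A=U^mV^n$, $C=U^kV^l$, and the hypothesis is $kn=ml$. The main obstacle, which is really the whole content of the proposition, is to check that this single scalar relation simultaneously kills all four cross-commutators $[A,C]$, $[A^{*},C^{*}]$, $[A,C^{*}]$, and $[A^{*},C]$. The vanishing of $[A,C]$ follows at once from the monomial commutator formula and $kn=lm$; taking adjoints gives $[A^{*},C^{*}]=0$. For the remaining pair, a direct expansion using the product rule yields
\[
[A,C^{*}] = \bigl(e^{2\pi i\theta(kn-kl)}-e^{2\pi i\theta(lm-kl)}\bigr)U^{m-k}V^{n-l},
\]
which again vanishes under $kn=lm$, and $[A^{*},C]=0$ follows by adjunction. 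Once all four building blocks commute, $f$ and $\delta_j(f)$ both lie in the abelian $*$-subalgebra they generate, completing the proof.
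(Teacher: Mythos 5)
Your proof is correct, and it takes a genuinely different route from the paper's. The paper proceeds by brute force: it writes out $f\,\partial_{1}f$ and $(\partial_{1}f)f$ as sums of eight monomials each, normal-orders everything, and compares the coefficients of $U^{m+k}V^{n+l}$, $U^{m-k}V^{n-l}$, etc.\ row by row, extracting the condition $kn=ml$ from the first row; the case of $\partial_{2}$ is then handled by the symmetry $m\leftrightarrow n$, $k\leftrightarrow l$. You instead isolate the structural reason the computation works: since $\delta_{1}$ and $\delta_{2}$ act diagonally on monomials, $f$ and $\delta_{j}(f)$ are linear combinations of the \emph{same} finite (or rapidly decreasing) set of monomials, so it suffices to check that these building blocks pairwise commute, which reduces to the single commutator identity $[U^{m}V^{n},U^{k}V^{l}]=(e^{-2\pi i\theta nk}-e^{-2\pi i\theta lm})U^{m+k}V^{n+l}$. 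This buys two things the paper's computation obscures: it explains why the one scalar relation $kn=ml$ simultaneously kills all four cross-commutators $[A,C]$, $[A^{*},C^{*}]$, $[A,C^{*}]$, $[A^{*},C]$ (the adjoint monomials are scalar multiples of $U^{-k}V^{-l}$, so they satisfy the same integer condition), and it handles $\delta_{1}$ and $\delta_{2}$ in one stroke. Two small points worth making explicit: for generic (irrational) $\theta$ the vanishing of the phase difference forces the integer equality $nk=lm$, which is where the genericity hypothesis enters; and in the first family the commutation of $U^{m}V^{m}$ with $V^{-k}U^{-k}$ for $k\neq m$ should be noted to follow from $V^{-k}U^{-k}$ being a phase times $U^{-k}V^{-k}$, which your diagonal-monomial observation already covers.
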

\begin{proof}
Each element $f=f^{*} \in A_{\theta}^{\infty}$ is of the form
 \begin{equation} \label{arbitraryselfadjf}
\Sigma_{(m,n) \in \mathbb{Z}^{2}} a_{mn}U^{m}V^{n} + \overline{a_{mn}} V^{-n}U^{-m}.
 \end{equation}
We first show that the general term of the series (\ref{arbitraryselfadjf}) commute with its partial derivatives. We take $f= a_{mn}U^{m}V^{n} +\overline{a_{mn}}V^{-n}U^{-m}$ and observe 

\begin{equation*}
\begin{split}
f \partial_{1}f  = & \big( a_{mn}U^{m}V^{n} +\overline{a_{mn}}V^{-n}U^{-m} \big) \big(m a_{mn}U^{m}V^{n} -m \overline{a_{mn}}V^{-n}U^{-m} \big) \\
= &   m a_{mn}^{2}U^{m}V^{n}U^{m}V^{n} - |a_{mn}|^{2}U^{m}V^{n}V^{-n}U^{-m}\\
& + m  |a_{mn}|^{2}V^{-n}U^{-m} U^{m}V^{n} - m \overline{a_{mn}}^{2} V^{-n}U^{-m}V^{-n}U^{-m}\\
= & m a_{mn}^{2}U^{m}V^{n}U^{m}V^{n} + m \overline{a_{mn}}^{2} V^{-n}U^{-m}V^{-n}U^{-m} = (\partial_{1}f) f ,
\end{split}
\end{equation*}
and in a similar way $f \partial_{2}f =(\partial_{2}f) f $. Now we consider the case when the product of arbitrary terms in the series(\ref{arbitraryselfadjf}) occurs. For 
$$f =  a_{mn}U^{m}V^{n} +\overline{a_{mn}}V^{-n}U^{-m} +  a_{kl}U^{k}V^{l} +\overline{a_{kl}}V^{-l}U^{-k},$$
we have
$$f \partial_{1}f =\big( a_{mn}U^{m}V^{n} +\overline{a_{mn}}V^{-n}U^{-m} +  a_{kl}U^{k}V^{l} +\overline{a_{kl}}V^{-l}U^{-k} \big) \times $$
$$\big( m a_{mn}U^{m}V^{n} -m \overline{a_{mn}}V^{-n}U^{-m} +  k a_{kl}U^{k}V^{l} -k \overline{a_{kl}}V^{-l}U^{-k} \big) $$
$$= k  a_{mn} a_{kl} U^{m}V^{n} U^{k}V^{l} -k a_{mn}\overline{a_{kl}} U^{m}V^{n}V^{-l}U^{-k} $$
$$+ k \overline{a_{mn}} a_{kl}V^{-n}U^{-m} U^{k}V^{l} - k \overline{a_{mn}} \overline{a_{kl}}V^{-n}U^{-m}  V^{-l}U^{-k}$$
$$+m  a_{mn} a_{kl} U^{k}V^{l}U^{m}V^{n} -m \overline{a_{mn}} a_{kl} U^{k}V^{l}V^{-n}U^{-m} $$ 
$$+ ma_{mn} \overline{a_{kl}} V^{-l}U^{-k}U^{m}V^{n} - m \overline{a_{mn}} \overline{a_{kl}}V^{-l}U^{-k} V^{-n}U^{-m}, $$

and
$$(\partial_{1}f) f = \big( a_{mn}U^{m}V^{n} +\overline{a_{mn}}V^{-n}U^{-m} +  a_{kl}U^{k}V^{l} +\overline{a_{kl}}V^{-l}U^{-k} \big) \times $$
$$\big( m a_{mn}U^{m}V^{n} -m \overline{a_{mn}}V^{-n}U^{-m} +  k a_{kl}U^{k}V^{l} -k \overline{a_{kl}}V^{-l}U^{-k} \big) $$

$$= m a_{mn} a_{kl} U^{m}V^{n} U^{k}V^{l} +m a_{mn}\overline{a_{kl}} U^{m}V^{n}V^{-l}U^{-k} $$
$$-m \overline{a_{mn}} a_{kl}V^{-n}U^{-m} U^{k}V^{l} -m  \overline{a_{mn}} \overline{a_{kl}}V^{-n}U^{-m}  V^{-l}U^{-k}$$
$$+ k  a_{mn} a_{kl} U^{k}V^{l}U^{m}V^{n} +k  \overline{a_{mn}} a_{kl} U^{k}V^{l}V^{-n}U^{-m}$$
$$-k a_{mn} \overline{a_{kl}} V^{-l}U^{-k}U^{m}V^{n} -k  \overline{a_{mn}} \overline{a_{kl}}V^{-l}U^{-k} V^{-n}U^{-m}. $$

The above two terms can be simplified by the commutation relation of the algebra to
$$f \partial_{1}f =\big\{ k  a_{mn} a_{kl} e^{-i\theta (kn) } +m a_{mn} a_{kl} e^{-i\theta (ml)} \big\} U^{m+k} V^{n+l}$$

\begin{equation} \label{leftfpartial}
 + \big\{ -k  a_{mn}\overline{a_{kl}} e^{-i\theta (-k)(n-l)} +m a_{mn}\overline{a_{kl}} e^{-i\theta (-l)(m-k)} \big\} U^{m-k} V^{n-l}  
\end{equation}
$$+ \big\{  k \overline{a_{mn}} a_{kl} e^{-i\theta (-n)(-m +k)} - m \overline{a_{mn}} a_{kl} e^{-i\theta (-m)(-n +l)} \big\} U^{-m+k} V^{-n+l}$$
$$+ \big\{  -k \overline{a_{mn}} \overline{a_{kl}} e^{-i \theta (kl + mn+nk)} -m \overline{a_{mn}}  \overline{a_{kl}} e^{-i \theta (kl + mn+ml)} \big\}U^{-m-k} V^{-n-l},$$

and to

$$(\partial_{1}f) f = \big\{ m  a_{mn} a_{kl} e^{-i\theta (kn) } +k a_{mn} a_{kl} e^{-i\theta (ml)} \big\} U^{m+k} V^{n+l}$$
\begin{equation} \label{rightfpartial}
 + \big\{ m  a_{mn}\overline{a_{kl}} e^{-i\theta (-k)(n-l)} -k a_{mn}\overline{a_{kl}} e^{-i\theta (-l)(m-k)} \big\} U^{m-k} V^{n-l}  
\end{equation}
$$+ \big\{  -m \overline{a_{mn}} a_{kl} e^{-i\theta (-n)(-m +k)} +k \overline{a_{mn}} a_{kl} e^{-i\theta (-m)(-n +l)} \big\} U^{-m+k} V^{-n+l}$$
$$+ \big\{  -m \overline{a_{mn}} \overline{a_{kl}} e^{-i \theta (kl + mn+nk)} -k \overline{a_{mn}} \overline{a_{kl}} e^{-i \theta (kl + mn+ml)} \big\}U^{-m-k} V^{-n-l}. $$

Now we check the equality of the relations (\ref{leftfpartial}) and (\ref{rightfpartial}). In order to have the equality of the first rows, we should have the equality of the coefficients $k a_{mn} a_{kl} e^{-i\theta (kn)} = k a_{mn} a_{kl} e^{-i\theta (ml)}$ which restrict us to the condition 

\begin{equation} \label{hpartialh=partialhh}
kn =ml.
\end{equation}
To have $f \partial_{1}f = (\partial_{1}f) f $, it suffices to change the role of $m$ with $n$ and $k$ with $l$ to reach to the same relation $lm= nk$. The condition $m=n$ and $k=l$ makes also the relation (\ref{hpartialh=partialhh}) hold. So the following $f$ commute with its canonical derivations

$$f= \Sigma_{m} a_{mm}U^{m}V^{m} + \overline{a_{mm}}V^{-m}U^{-m}.$$
For the condition $kn =ml$, we have the example
$$f =a_{23} U^{2} V^{3} +\overline{a_{23}} V^{-3} U^{-2} + a_{46} U^{4}V^{6} +\overline{a_{46}} V^{-6}U^{-4}.$$
\end{proof}
%...................................................................%
%...................................................................%
\subsection{A non-diagonal metric}
 Consider the metric
\begin{equation} \label{tmetric}
g=
\begin{pmatrix}
e^{f} & t \\
t & e^{-f} \\
\end{pmatrix},
\end{equation}
 
where $0 \leq t < 1$. This metric is Riemannian since we know every metric 
$\begin{pmatrix}
a & c \\
c^{*} & b \\ 
\end{pmatrix} $
is positive in the two dimensional matrix $C^{*}-$algebra on $A_{\Theta}^{\infty}$ if and only if we have $a \geq c^{*} b^{-1} c$ which is translated in this case to $e^{f} > t^{2} e^{f}$ or $(1-t^{2})e^{f} > 0$. The inverse of this metric is obtained to be
$g^{-1}= \frac{1}{1-t^{2}}
\begin{pmatrix}
e^{-f} & -t \\
-t & e^{f} \\
\end{pmatrix}
$. We note that this inverse metric is a two sided inverse. In spite of previous computations about the curvature, since the metric in this case is non-diagonal we should consider the formula involving all the Christoffel symbols. Besides, we know that due to the choose of Levi-Civita connection, these symbols are symmetric, $\Gamma_{ij}^{k} = \Gamma_{ji}^{k}$. 
\begin{proposition} \label{riemanniannondiagonal}
Given the Riemannian metric (\ref{tmetric}) on the vector field $X_{\theta}$ on $A_{\theta}^{\infty}$ and its associated Levi-Civita connection, the Gauss-Bonnet theorem holds for each $0 \leq t < 1$.
\end{proposition}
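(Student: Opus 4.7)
A crucial preliminary observation is that $\det g = e^{f} e^{-f} - t^{2} = 1 - t^{2}$, a positive real constant. Hence $\sqrt{\det g} = \sqrt{1-t^{2}}$ is central and factors out of the trace, reducing the Gauss-Bonnet assertion to $\tau(R) = 0$. Moreover the explicit inverse $g^{-1} = (1-t^{2})^{-1}\begin{pmatrix} e^{-f} & -t \\ -t & e^{f}\end{pmatrix}$ has a central scalar prefactor and an off-diagonal constant entry $-t$; both commute with every element of $A_{\theta}^{\infty}$ and pass freely through $\tau$. The plan is then to mirror the strategy of Propositions \ref{G-B}, \ref{G-Bsecondmetric} and \ref{G-Barbitraryf}: compute the Christoffel symbols, assemble the Riemann tensor, form the scalar curvature, and show its $\tau$-integral vanishes by integration by parts together with the tracial property.

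The first step is to compute the lower-index Christoffel coefficients $\Gamma_{ij,k} := \langle \bigtriangledown_{\partial_{i}}\partial_{j}, \partial_{k}\rangle$ from the Koszul formula (\ref{Cristoffell}). Because $\partial_{i}(t)=0$, only the four quantities $\partial_{i}e^{\pm f}$ ($i=1,2$) appear, and the symmetry of the metric under the flip $(f,1,2)\leftrightarrow(-f,2,1)$ collapses the eight coefficients to a handful of independent ones. The second step is to solve the linear system $\Gamma_{ij,k} = \sum_{l}\Gamma_{ij}^{l}\, g_{lk}$ using $g^{-1}$ to obtain the upper-index $\Gamma_{ij}^{l}$; this is purely algebraic. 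The third step is to assemble $R_{ijkl}=\langle(\bigtriangledown_{\partial_{j}}\bigtriangledown_{\partial_{i}}-\bigtriangledown_{\partial_{i}}\bigtriangledown_{\partial_{j}})\partial_{k},\partial_{l}\rangle$, bearing in mind (Remark \ref{nonsymmetrictensorremark}) that the antisymmetry $R_{ijkl}=-R_{ijlk}$ is not automatic, so all four of $R_{1212}, R_{2121}, R_{1221}, R_{2112}$ must be retained, and then contract with $g^{-1}\otimes g^{-1}$ under the Levi-Civita convention used earlier in the paper.

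The main obstacle is the last step: showing $\tau(R)=0$ by repeated use of $\tau(\partial_{i}\,\cdot\,)=0$ and $\tau(ab)=\tau(ba)$. I expect the cancellation to organize itself along the two parameters $f$ and $t$ independently. Setting $t=0$ reduces the metric to $\mathrm{diag}(e^{f},e^{-f})$, whose Gauss-Bonnet identity can be handled by essentially the integration-by-parts chain used in (\ref{GBnondiagonal}) and Proposition \ref{G-Barbitraryf}. The terms carrying explicit factors of $t$ are controlled because $t$ is a central scalar with $\partial_{i}t=0$, so every $t$-coefficient in $R$ is, after cyclic rearrangement under $\tau$, a total $\partial_{i}$-derivative and vanishes. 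Summing the two contributions yields $\tau(R)=0$, and hence the Gauss-Bonnet theorem for the full family $0\leq t<1$.
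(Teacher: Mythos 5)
Your setup is sound and matches the paper's: $\det g = 1-t^{2}$ is a central constant, the Christoffel symbols come from the Koszul formula with $\partial_{i}t=0$, the scalar curvature is contracted with the full inverse metric, and the antisymmetry $R_{ijkl}=-R_{jikl}$ in the first pair (which does hold here) collapses everything to $\frac{1}{1-t^{2}}(R_{1212}+R_{2121})$. The gap is in the final cancellation, where the mechanism you describe would not go through. The $t$-linear terms are \emph{not} individually total derivatives under $\tau$: after integration by parts one is left with traces of the form $\tau\big((\partial_{1}e^{f})(\partial_{2}e^{-f})\big) = -\tau\big(e^{f}\,\partial_{1}\partial_{2}e^{-f}\big)$, which is generically nonzero (already in the commutative case $\int(\partial_{1}u)(\partial_{2}u)\,dx\,dy\neq 0$ for, say, $u=\cos(x+y)$). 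What actually happens is a cross-cancellation: after collecting coefficients, the two surviving $t$-linear traces are $+\,\tau\big(t(\partial_{1}e^{f})(\partial_{2}e^{-f})\big)$ and $-\,\tau\big(t(\partial_{1}e^{-f})(\partial_{2}e^{f})\big)$, and these are equal to each other by integration by parts plus traciality, hence cancel as a pair. Your argument as stated would conclude that each vanishes separately, which is false.

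The $t=0$ base case also cannot be dispatched by appeal to (\ref{GBnondiagonal}) or Proposition \ref{G-Barbitraryf}. The former concerns the metric $\mathrm{diag}(e^{f},1)$, for which Section \ref{SecGBFails} shows the Gauss--Bonnet trace is generically \emph{nonzero}, so "essentially the same integration-by-parts chain" cannot yield vanishing here; the latter assumes $f$ commutes with its partial derivatives, a hypothesis absent from Proposition \ref{riemanniannondiagonal}. The vanishing for $\mathrm{diag}(e^{f},e^{-f})$ depends essentially on the two diagonal entries being mutually inverse (the $ab=1$ condition of \cite{Pet1} recalled in the remark after Proposition \ref{G-Barbitraryf}), and in the paper it is obtained only by writing out all terms of $R_{1212}+R_{2121}$ and checking that the coefficients of $e^{f}(\partial_{2}e^{f})(\partial_{2}e^{-f})$ and of $e^{-f}(\partial_{1}e^{f})(\partial_{1}e^{-f})$ each sum to zero. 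So the proposed decomposition --- handle $t=0$ by earlier results, kill the $t$-terms as total derivatives --- does not work; the explicit coefficient bookkeeping is unavoidable.
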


\begin{proof}
We have

\begin{equation} \label{connection}
\bigtriangledown_{\partial_{1}} \partial_{2} =\bigtriangledown_{\partial_{2}} \partial_{1} = \Gamma_{12}^{1} \partial_{1} + \Gamma_{12}^{2} \partial_{2},
\end{equation}

$$   \bigtriangledown_{\partial_{1}} \partial_{1} = \Gamma_{11}^{1} \partial_{1} + \Gamma_{11}^{2} \partial_{2}, \hspace{1 cm} \bigtriangledown_{\partial_{2}} \partial_{2} = \Gamma_{22}^{1} \partial_{1} + \Gamma_{22}^{2} \partial_{2}.$$
\\
We compute the Christoffel coefficients by means of the formula:
$$\Gamma_{ij}^{k} = \frac{1}{2} g^{kl} \big\{
\partial_{i}g_{jl} + \partial_{j}g_{il} - \partial_{k}g_{ij}
 \big\},$$
which gives the following relations:
$$\Gamma_{11}^{1} = \frac{1}{2} g^{11} \partial_{1}g_{11} - \frac{1}{2} g^{12} \partial_{2}g_{11}
= \frac{1}{2(1-t^{2})} e^{-f}\partial_{1}e^{f} + \frac{t}{2(1-t^{2})}\partial_{2}e^{f}, $$

$$\Gamma_{11}^{2} = \frac{1}{2} g^{21} \partial_{1}g_{11} - \frac{1}{2} g^{22} \partial_{2}g_{11}
=- \frac{t}{2(1-t^{2})}\partial_{1}e^{f} - \frac{1}{2(1-t^{2})} e^{f} \partial_{2}e^{f}, $$
and
$$\Gamma_{22}^{1} = -\frac{1}{2} g^{11} \partial_{1}g_{22} + \frac{1}{2} g^{12} \partial_{2}g_{22}
= -\frac{1}{2(1-t^{2})} e^{-f}\partial_{1}e^{-f} - \frac{t}{2(1-t^{2})}\partial_{2}e^{-f}, $$

\begin{equation} \label{christoffelsymbols}
\Gamma_{22}^{2} = -\frac{1}{2} g^{21} \partial_{1}g_{22} + \frac{1}{2} g^{22} \partial_{2}g_{22}
= \frac{t}{2(1-t^{2})} \partial_{1}e^{-f} + \frac{1}{2(1-t^{2})}e^{f}\partial_{2}e^{-f}, 
\end{equation}
and
$$\Gamma_{12}^{1} = \frac{1}{2} g^{11} \partial_{2}g_{11} + \frac{1}{2} g^{12} \partial_{1}g_{22}
= \frac{1}{2(1-t^{2})} e^{-f}\partial_{2}e^{f} - \frac{t}{2(1-t^{2})}\partial_{1}e^{-f}, $$

$$\Gamma_{12}^{2} = \frac{1}{2} g^{21} \partial_{2}g_{11} + \frac{1}{2} g^{22} \partial_{1}g_{22}
=- \frac{t}{2(1-t^{2})} \partial_{2}e^{f} + \frac{1}{2(1-t^{2})}e^{f} \partial_{1}e^{-f}.$$
\\
Now according to relations (\ref{connection}) and (\ref{christoffelsymbols}) we have
%..........................................................%
$$\bigtriangledown_{\partial_{1}} \partial_{2} = \Gamma_{12}^{1} \partial_{1} + \Gamma_{12}^{2} \partial_{2}$$
$$ = \big\{ \frac{1}{2(1-t^{2})} e^{-f}\partial_{2}e^{f} - \frac{t}{2(1-t^{2})}\partial_{1}e^{-f} \big\} \partial_{1} 
+\big\{- \frac{t}{2(1-t^{2})} \partial_{2}e^{f} + \frac{1}{2(1-t^{2})}e^{f} \partial_{1}e^{-f}  \big\}\partial_{2}$$
$$= \frac{1}{2(1-t^{2})} \big\{ e^{-f}\partial_{2}e^{f} -t \partial_{1}e^{-f} \big\} \partial_{1} 
+\big\{- t \partial_{2}e^{f} + e^{f} \partial_{1}e^{-f}  \big\}\partial_{2},$$

%..........................................................%

$$ \bigtriangledown_{\partial_{1}} \partial_{1} = \Gamma_{11}^{1} \partial_{1} + \Gamma_{11}^{2} \partial_{2}$$
$$=\big\{ \frac{1}{2(1-t^{2})} e^{-f}\partial_{1}e^{f} + \frac{t}{2(1-t^{2})}\partial_{2}e^{f} \big\} \partial_{1}
+ \big\{ - \frac{t}{2(1-t^{2})}\partial_{1}e^{f} - \frac{1}{2(1-t^{2})} e^{f} \partial_{2}e^{f} \big\} \partial_{2}
$$ 
$$= \frac{1}{2(1-t^{2})} \big\{ e^{-f}\partial_{1}e^{f} + t\partial_{2}e^{f} \big\} \partial_{1}
+ \big\{- t\partial_{1}e^{f} -e^{f} \partial_{2}e^{f} \big\} \partial_{2},$$

%..........................................................%

$$ \bigtriangledown_{\partial_{2}} \partial_{2} = \Gamma_{22}^{1} \partial_{1} + \Gamma_{22}^{2} \partial_{2}$$
$$ =  \big\{ -\frac{1}{2(1-t^{2})} e^{-f}\partial_{1}e^{-f} - \frac{t}{2(1-t^{2})}\partial_{2}e^{-f} \big\} \partial_{1}
+ \big\{  \frac{t}{2(1-t^{2})} \partial_{1}e^{-f} + \frac{1}{2(1-t^{2})}e^{f}\partial_{2}e^{-f} \big\} \partial_{2}
$$
$$= \frac{1}{2(1-t^{2})} \big\{ -e^{-f}\partial_{1}e^{-f} - t \partial_{2}e^{-f} \big\} \partial_{1} + \big\{ t \partial_{1}e^{-f} + e^{f}\partial_{2}e^{-f} \big\} \partial_{2}. $$

%..........................................................%

We use the following relations in computing the scalar curvature
$$R = g^{ij}R_{ij}, \hspace{1 cm} R_{ij}= R_{ikj}^{k}, $$ 
$$R_{11} = g^{12}R_{1211} + g^{22}R_{1212}, \hspace{1 cm} R_{12}= -g^{12}R_{2121} + g^{22}R_{1222},$$
$$R_{21} = g^{11}R_{2111} - g^{12}R_{1212}, \hspace{1 cm} R_{22} = g^{11}R_{2121} + g^{12}R_{2122}.$$
\\
So
$$ R = g^{11}g^{12}R_{1211} + g^{11}g^{22}R_{1212} - g^{12}g^{12}R_{2121}+ g^{12}g^{22}R_{1222}$$
$$ +g^{12}g^{11}R_{2111} - g^{12}g^{12}R_{1212} + g^{22}g^{11}R_{2121}+ g^{22}g^{12}R_{2122} $$

$$= \frac{1}{(1-t^{2})^{2}} \big\{ -te^{-f} R_{1211} + R_{1212} - t^{2} R_{2121} -te^{f}R_{1222}$$
$$-te^{-f}R_{2111}-t^{2}R_{1212} +R_{2121}- t e^{f} R_{2122} \big\},$$
which by symmetry $R_{ijkl} = -R_{jikl}$ becomes
\begin{equation} \label{curvaturesymbolic}
 R  = \frac{1}{(1-t^{2})^{2}} \big\{ (1-t^{2})(R_{1212} + R_{2121}) \big\}=\frac{1}{(1-t^{2})} \big\{R_{1212} + R_{2121} \big\} .
\end{equation}
Now we compute the curvature tensors in the relation (\ref{curvaturesymbolic})

%..........................................................%
$$R_{1212}= <(\bigtriangledown_{2}\bigtriangledown_{1} -\bigtriangledown_{1}\bigtriangledown_{2}) \partial_{1}, \partial_{2}> = A-B, $$
%..........................................................%

where
$$A= <\bigtriangledown_{2}\bigtriangledown_{1}\partial_{1}, \partial_{2}> =$$
$$=\frac{1}{2(1-t^{2})} <\bigtriangledown_{2} \big( \big\{ e^{-f}\partial_{1}e^{f} + t\partial_{2}e^{f} \big\} \partial_{1} +\big\{- t\partial_{1}e^{f} -e^{f} \partial_{2}e^{f} \big\} \partial_{2}\big), \partial_{2} >$$
$$= \frac{t}{2(1-t^{2})} \partial_{2} \big\{ e^{-f}\partial_{1}e^{f} + t\partial_{2}e^{f} \big\}+   \frac{1}{2(1-t^{2})} \partial_{2} \big\{- t\partial_{1}e^{f} -e^{f} \partial_{2}e^{f} \big\}e^{-f} $$
$$+ \frac{1}{2(1-t^{2})} \big\{ e^{-f}\partial_{1}e^{f} + t\partial_{2}e^{f} \big\} <\bigtriangledown_{2}\partial_{1}, \, \partial_{2}> $$
$$+\frac{1}{2(1-t^{2})} \big\{- t\partial_{1}e^{f} -e^{f} \partial_{2}e^{f} \big\} \big\} <\bigtriangledown_{2}\partial_{2}, \, \partial_{2}>, $$
\\
and
%..........................................................%
$$B = <\bigtriangledown_{1}\bigtriangledown_{2} \partial_{1}, \partial_{2}> $$
$$=  \frac{1}{2(1-t^{2})} <\bigtriangledown_{1} \big( \big\{ e^{-f}\partial_{2}e^{f} -t \partial_{1}e^{-f} \big\} \partial_{1} +\big\{- t \partial_{2}e^{f} + e^{f} \partial_{1}e^{-f} \big\}\partial_{2} \big),  \partial_{2}>$$

$$=  \frac{t}{2(1-t^{2})} \partial_{1} \big\{ e^{-f}\partial_{2}e^{f} -t \partial_{1}e^{-f} \big\} + \frac{1}{2(1-t^{2})} \partial_{1} \big\{- t \partial_{2}e^{f} + e^{f} \partial_{1}e^{-f} \big\}e^{-f} $$

$$+ \frac{1}{2(1-t^{2})} \big\{ e^{-f}\partial_{2}e^{f} -t \partial_{1}e^{-f} \big\} <\bigtriangledown_{1} \partial_{1}, \partial_{2}> 
+ \frac{1}{2(1-t^{2})}\big\{- t \partial_{2}e^{f} + e^{f} \partial_{1}e^{-f} \big\} <\bigtriangledown_{1} \partial_{2}, \partial_{2}> $$
\\
On the other hand, we have
%..........................................................%
$$R_{2121}= <(\bigtriangledown_{1}\bigtriangledown_{2} -\bigtriangledown_{2}\bigtriangledown_{1}) \partial_{2}, \partial_{1}> = A-B $$
%..........................................................%

$$A= <  \bigtriangledown_{1}\bigtriangledown_{2}\partial_{2}, \partial_{1}> $$
$$= \frac{1}{2(1-t^{2})} <\bigtriangledown_{1} \big( \big\{ -e^{-f}\partial_{1}e^{-f} - t \partial_{2}e^{-f} \big\} \partial_{1} 
+ \big\{ t \partial_{1}e^{-f} + e^{f}\partial_{2}e^{-f} \big\} \partial_{2} \big), \partial_{1}>$$

$$= \frac{1}{2(1-t^{2})} \partial_{1} \big\{ -e^{-f}\partial_{1}e^{-f} - t \partial_{2}e^{-f} \big\}e^{f} + \frac{t}{2(1-t^{2})} \partial_{1} \big\{ t \partial_{1}e^{-f} + e^{f}\partial_{2}e^{-f} \big\} $$
$$+  \frac{1}{2(1-t^{2})} \big\{ -e^{-f}\partial_{1}e^{-f} - t \partial_{2}e^{-f} \big\}  <\bigtriangledown_{1}\partial_{1}, \partial_{1}>$$
$$+ \frac{1}{2(1-t^{2})} \big\{ t \partial_{1}e^{-f} + e^{f}\partial_{2}e^{-f} \big\} <\bigtriangledown_{1}\partial_{2}, \partial_{1}>,$$
%..........................................................%
\\
$$B= <\bigtriangledown_{2}\bigtriangledown_{1}\partial_{2}, \partial_{1}> $$
$$= \frac{1}{2(1-t^{2})} <\bigtriangledown_{2} \big( \big\{ e^{-f}\partial_{2}e^{f} -t \partial_{1}e^{-f} \big\} \partial_{1} 
+\big\{- t \partial_{2}e^{f} + e^{f} \partial_{1}e^{-f}  \big\}\partial_{2} \big),\partial_{1}>$$

$$= \frac{1}{2(1-t^{2})} \partial_{2} \big\{ e^{-f}\partial_{2}e^{f} -t \partial_{1}e^{-f} \big\}e^{f}
+\frac{t}{2(1-t^{2})} \partial_{2} \big\{- t \partial_{2}e^{f} + e^{f} \partial_{1}e^{-f}  \big\}$$

$$+\frac{1}{2(1-t^{2})}\big\{ e^{-f}\partial_{2}e^{f} -t \partial_{1}e^{-f} \big\} <\bigtriangledown_{2}\partial_{1}, \partial_{1}>$$
$$+ \frac{1}{2(1-t^{2})} \big\{- t \partial_{2}e^{f} + e^{f} \partial_{1}e^{-f}  \big\}<\bigtriangledown_{2}\partial_{2}, \partial_{1}>.$$
%..........................................................%

Since $det g=1-t^{2}$, the relation (\ref{curvaturesymbolic}) gives 
$$\tau(R \sqrt{detg}) = (1-t^{2})^{\frac{1}{2}} \tau(R)= (1-t^{2})^{\frac{-1}{2}} \tau(R_{1212}+ R_{2121}).$$
According to the above relations for the curvature tensors $R_{1212}$ and $R_{2121}$,
$$2(1-t^{2})\tau(R_{1212})=  \tau \big( \partial_{2} \big\{- t\partial_{1}e^{f} -e^{f} \partial_{2}e^{f} \big\}e^{-f} + \frac{1}{2} \big\{ e^{-f}\partial_{1}e^{f} + t\partial_{2}e^{f} \big\} \partial_{1}e^{-f} $$
$$+\frac{1}{2} \big\{- t\partial_{1}e^{f} -e^{f} \partial_{2}e^{f} \big\}\partial_{2}e^{-f} -\partial_{1} \big\{- t \partial_{2}e^{f} + e^{f} \partial_{1}e^{-f} \big\}e^{-f}$$

$$+\frac{1}{2} \big\{ e^{-f}\partial_{2}e^{f} -t \partial_{1}e^{-f} \big\}\partial_{2}e^{f} 
- \frac{1}{2}\big\{- t \partial_{2}e^{f} + e^{f} \partial_{1}e^{-f} \big\} \partial_{1}e^{-f}\big),$$
%..........................................................%
and
$$2(1-t^{2}) \tau(R_{2121}) = \tau \big( 
\partial_{1} \big\{ -e^{-f}\partial_{1}e^{-f} - t \partial_{2}e^{-f} \big\}e^{f}
+ \frac{1}{2} \big\{ -e^{-f}\partial_{1}e^{-f} - t \partial_{2}e^{-f} \big\} \partial_{1}e^{f}$$
$$ + \frac{1}{2} \big\{ t \partial_{1}e^{-f} + e^{f}\partial_{2}e^{-f} \big\} \partial_{2}e^{f}
-\partial_{2} \big\{e^{-f}\partial_{2}e^{f} -t \partial_{1}e^{-f} \big\}e^{f}$$
$$-\frac{1}{2}\big\{ e^{-f}\partial_{2}e^{f} -t \partial_{1}e^{-f} \big\}\partial_{2}e^{f}
+\frac{1}{2} \big\{- t \partial_{2}e^{f} + e^{f} \partial_{1}e^{-f}  \big\}\partial_{1}e^{-f}\big).$$
%..........................................................%
%..........................................................%
Since the derivations vanish under the trace, it is simplified to
$$2(1-t^{2})\tau(R_{1212})=  \tau \big(  t(\partial_{1}e^{f})(\partial_{2}e^{-f}) + e^{f} (\partial_{2}e^{f})(\partial_{2}e^{-f}) + \frac{1}{2} e^{-f}(\partial_{1}e^{f})(\partial_{1}e^{-f})$$
$$ +\frac{1}{2} t(\partial_{2}e^{f})(\partial_{1}e^{-f}) -\frac{1}{2}t(\partial_{1}e^{f})(\partial_{2}e^{-f}) - \frac{1}{2}e^{f} (\partial_{2}e^{f})(\partial_{2}e^{-f})$$
$$ - t (\partial_{2}e^{f})(\partial_{1}e^{-f}) + e^{f} (\partial_{1}e^{-f})(\partial_{1}e^{-f})
+ \frac{1}{2} e^{-f}(\partial_{2}e^{f})(\partial_{2}e^{f})$$
$$ -\frac{1}{2}t (\partial_{1}e^{-f})(\partial_{2}e^{f}) 
+\frac{1}{2} t (\partial_{2}e^{f})(\partial_{1}e^{-f}) -\frac{1}{2} e^{f} (\partial_{1}e^{-f}) (\partial_{1}e^{-f}) \big),$$
%..........................................................%
and
$$2(1-t^{2}) \tau(R_{2121}) = \tau \big( 
e^{-f}(\partial_{1}e^{-f})(\partial_{1}e^{f}) + t (\partial_{2}e^{-f})(\partial_{1}e^{f})
- \frac{1}{2}e^{-f}(\partial_{1}e^{-f})(\partial_{1}e^{f})$$
$$ - \frac{1}{2}t (\partial_{2}e^{-f})(\partial_{1}e^{f}) + \frac{1}{2} t (\partial_{1}e^{-f})(\partial_{2}e^{f}) +\frac{1}{2} e^{f}(\partial_{2}e^{-f})(\partial_{2}e^{f})$$
$$ +e^{-f}(\partial_{2}e^{f})(\partial_{2}e^{f}) -t (\partial_{1}e^{-f})(\partial_{2}e^{f}) -\frac{1}{2}e^{-f}(\partial_{2}e^{f})(\partial_{2}e^{f}) $$
$$+\frac{1}{2}t (\partial_{1}e^{-f})(\partial_{2}e^{f})
-\frac{1}{2}t (\partial_{2}e^{f})(\partial_{1}e^{-f}) + \frac{1}{2}e^{f} (\partial_{1}e^{-f})(\partial_{1}e^{-f})\big).$$
%..........................................................%
\\
By separating the coefficients of similar terms we obtain

\begin{equation*}
\begin{split}
= & \tau \big( \{ 1-\frac{1}{2} + 1 - \frac{1}{2}\} t(\partial_{1}e^{f})(\partial_{2}e^{-f}) \\
& + \{\frac{1}{2} -1 -\frac{1}{2}+\frac{1}{2}+\frac{1}{2}-1+\frac{1}{2}-\frac{1}{2} \}t (\partial_{1}e^{-f})(\partial_{2}e^{f})\\
& +\{ 1-\frac{1}{2} -\frac{1}{2}+\frac{1}{2} -1 +\frac{1}{2} \} e^{f}(\partial_{2}e^{f})(\partial_{2}e^{-f})\\
& + \{\frac{1}{2} -1 +\frac{1}{2} +1 -\frac{1}{2}  -\frac{1}{2} \} e^{-f}(\partial_{1}e^{f})(\partial_{1}e^{-f}) \big),
\end{split}
\end{equation*}

that will be zero since $\tau(t(\partial_{1}e^{f})(\partial_{2}e^{-f})) = \tau(t (\partial_{1}e^{-f})(\partial_{2}e^{f}))$. This proves the proposition.
\end{proof}
\begin{remark}
Although we have considered the sum of $R_{1212}$ and $R_{2121}$ under the trace, but in fact we have both $\tau(R_{1212})=0$ and $\tau(R_{2121})=0$. However this does not mean that $R_{1212}= R_{2121}$ even when $t=0$ which was considered in \cite{Pet1}.
\end{remark}
If we substitute in Proposition \ref{Levi-Civita} the Riemannian metric by a hermitian metric, we can still apply the connection in this proposition, but the connection is not the  unique associated connection, called Chern connection. However we still have the same result of Proposition \ref{riemanniannondiagonal}. We note that by using a hermitian metric, we have already omitted the condition $(4)$ of the Riemannian metric in Definition \ref{Riemannianmetric}.
\begin{proposition} \label{hermitiannondiagonal}
Given the hermitian metric
$\begin{pmatrix}
e^{f} & \alpha \\
\overline{\alpha} & e^{-f} \\
\end{pmatrix} $
for $\alpha \in \mathbb{C}$ and $0 \leq |\alpha| < 1$ and also the Riemannian connection given in \ref{Levi-Civita}, the Gauss-Bonnet theorem holds.
\end{proposition}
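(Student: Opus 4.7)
The strategy is to mimic the computation of Proposition \ref{riemanniannondiagonal} almost verbatim, exploiting the fact that $\alpha \in \mathbb{C}$ is central in $A_\Theta^\infty$ and in particular $\partial_i \alpha = \partial_i \overline{\alpha} = 0$. First, I would verify that the inverse metric is
\[
g^{-1} = \frac{1}{1-|\alpha|^2}\begin{pmatrix} e^{-f} & -\alpha \\ -\overline{\alpha} & e^f \end{pmatrix},
\]
which is a genuine two-sided inverse since scalars commute with $e^{\pm f}$, and that $\det g = 1 - |\alpha|^2 > 0$. Since $g_{12}=\alpha$ and $g_{21}=\overline{\alpha}$ are constants, condition (4) of Definition \ref{Riemannianmetric} fails but all metric entries are still annihilated by $\partial_1, \partial_2$, so formula (\ref{Cristoffell}) can be applied as a purely formal definition of the Christoffel symbols.

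Next, I would recompute the Christoffel symbols $\Gamma_{ij}^k$ by the same template used in Proposition \ref{riemanniannondiagonal}. The only modification is that every occurrence of $t$ multiplied by $g^{21}$ or $g^{12}$ is replaced by $\overline{\alpha}$ or $\alpha$ respectively; the $e^{\pm f}$-derivative structure is unchanged because $\partial_i$ kills the scalars. The expressions for $\nabla_{\partial_i}\partial_j$ therefore carry over with $t$ promoted to either $\alpha$ or $\overline{\alpha}$ in a bookkept way. The scalar curvature still satisfies
\[
R = \frac{1}{1-|\alpha|^2}\bigl( R_{1212} + R_{2121} \bigr),
\]
by the same symbolic expansion of $R = g^{ij}R_{ij}$ together with the antisymmetry $R_{ijkl} = -R_{jikl}$; the cross terms $g^{12}g^{12}$ and $g^{21}g^{21}$ that appear in the expansion of $R$ combine to $|\alpha|^2$ and are killed by this antisymmetry as before.

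Finally I would evaluate $\tau(R\sqrt{\det g}) = (1-|\alpha|^2)^{-1/2}\,\tau(R_{1212}+R_{2121})$. After dropping all pure divergences under $\tau$ and collecting surviving terms into four groups indexed by their $e^{\pm f}$-content, the coefficient of each of $e^f(\partial_2 e^f)(\partial_2 e^{-f})$ and $e^{-f}(\partial_1 e^f)(\partial_1 e^{-f})$ vanishes by the same arithmetic check as in Proposition \ref{riemanniannondiagonal}. The remaining terms are of the form $\alpha\,\tau\bigl((\partial_1 e^f)(\partial_2 e^{-f})\bigr)$ and $\overline{\alpha}\,\tau\bigl((\partial_1 e^{-f})(\partial_2 e^f)\bigr)$, each with an integer-coefficient prefactor. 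Using the tracial property together with $\partial_i(e^f \cdot e^{-f}) = 0$ one shows
\[
\tau\bigl((\partial_1 e^f)(\partial_2 e^{-f})\bigr) = \tau\bigl((\partial_2 e^{-f})(\partial_1 e^f)\bigr) = \tau\bigl((\partial_1 e^{-f})(\partial_2 e^f)\bigr),
\]
so the $\alpha$ and $\overline{\alpha}$ terms can be pooled against the same scalar trace and, with the coefficients computed in the Riemannian case, they cancel.

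The main obstacle is purely bookkeeping: one has to keep track, through the whole calculation of $R_{1212}$ and $R_{2121}$, of which occurrences of $t$ in Proposition \ref{riemanniannondiagonal} correspond to $\alpha$ and which to $\overline{\alpha}$, and then check that the pairings surviving under the trace involve $\alpha$ and $\overline{\alpha}$ with coefficients that sum to zero (rather than one of them occurring alone). The centrality of $\alpha$ and the identity above are what make this work; no new analytic ingredient beyond what was used for real $t$ is required.
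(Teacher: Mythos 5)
Your proposal is correct and follows essentially the same route as the paper, which simply asserts that the computation of Proposition \ref{riemanniannondiagonal} carries over with the off-diagonal entry $t$ replaced by $\alpha$ (resp.\ $\overline{\alpha}$); your extra bookkeeping of which occurrences of $t$ become $\alpha$ versus $\overline{\alpha}$, and the observation that the surviving cross terms pool against the single scalar trace $\tau\bigl((\partial_{1}e^{f})(\partial_{2}e^{-f})\bigr)$, supplies exactly what the paper leaves implicit. Note also that your inverse metric, with minus signs on the off-diagonal entries, is the correct one --- the paper's displayed inverse omits them.
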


\begin{proof}
The inverse metric is given by 
$
\frac{1}{1- |\alpha|^{2}}
\begin{pmatrix}
e^{-f} & \alpha \\
\overline{\alpha} & e^{f} \\
\end{pmatrix}
$.  The proof is in the same line of Proposition \ref{riemanniannondiagonal}, unless the entry $g_{21}=t$ should be replaced by $\overline{\alpha}$. 
\end{proof}

\section{Obstructions to the Gauss-Bonnet theorem} \label{SecGBFails}
In this section we discuss about the cases when the Gauss-Bonnet theorem does not hold. The methods we use are higher order perturbations as well as using projections. 
\subsection{Failing the G-B by means of perturbations}
By relation (\ref{GBnondiagonal}), the Gauss-Bonnet trace for the metric
$g=
\begin{pmatrix}
e^{f} & 0 \\
0 & 1 \\
\end{pmatrix}
$
is given by
\begin{equation} \label{GBtau2}
\tau(R\sqrt{g})=\frac{1}{2} \tau\big( (\partial_{2}e^{-\frac{f}{2}}) (\partial_{2}e^{f}) + (\partial_{2}e^{\frac{f}{2}})(\partial_{2}e^{f}) e^{-f} \big).
\end{equation}
Now we perturb the metric component $g_{11}$ by the time parameter to $e^{tf}$ to reach to 
\begin{equation} 
\Omega_{f}(t) :=\frac{1}{2} \tau\big( (\partial_{2}e^{-\frac{tf}{2}}) (\partial_{2}e^{tf}) + (\partial_{2}e^{\frac{tf}{2}})(\partial_{2}e^{tf}) e^{-tf} \big).
\end{equation}
where we see $\Omega_{f}(1) = \tau(R \sqrt{g})$. The trace $\Omega_{f}(t): \mathbb{R} \longrightarrow \mathbb{C}$ has formal power series 
\begin{equation} \label{powerseries}
\Omega_{f}(t) = \Omega_{f}(0) + \Omega_{f}^{'}(0)t+ \Omega_{f}^{''}(0)t^{2}+\Omega_{f}^{'''}(0)t^{3} + \Omega_{f}^{(4)}(0)t^{4} + ... \hspace{1 cm}.
\end{equation}
While this series is convergent at $t =0$, it gives non-trivial cases in the conformal flat metrics (\ref{conformametrictwodim}) for which the G-B    term (\ref{GBtau2}) does not vanish.
\begin{proposition} \label{twosidedcorespondence}
The Gauss-Bonnet theorem holds for any self-adjoint element $f \in A_{\Theta}^{\infty}$, i.e. $\Omega_{f}(1) =0$ if and only if for any $t \in \mathbb{R}$, $\Omega_{f}(t) =0$. 
\end{proposition}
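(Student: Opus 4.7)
My plan is to reduce the equivalence to the simple scaling identity
\[
\Omega_{f}(t) \;=\; \Omega_{tf}(1),
\]
valid for every self-adjoint $f\in A_{\Theta}^{\infty}$ and every $t\in\mathbb{R}$. This identity is immediate from the defining expression of $\Omega_{f}(t)$: the parameter $t$ enters only through the combinations $e^{\pm tf/2}$ and $e^{\pm tf}$, i.e. through $e^{\pm (tf)/2}$ and $e^{\pm (tf)}$, and $\partial_{2}$ is $\mathbb{C}$-linear in the argument that appears inside the exponential (it is a derivation, not a $t$-derivative). Thus substituting the self-adjoint element $h:=tf$ in place of the dummy variable in $\Omega_{\cdot}(1)$ produces precisely $\Omega_{f}(t)$.

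With this identity in hand, both implications become one-line consequences. First, assume $\Omega_{h}(1)=0$ for every self-adjoint $h\in A_{\Theta}^{\infty}$. For any self-adjoint $f$ and any $t\in\mathbb{R}$, the element $h:=tf$ is again self-adjoint (since $t$ is real and $f=f^{*}$), so
\[
\Omega_{f}(t)\;=\;\Omega_{tf}(1)\;=\;0,
\]
which is the non-trivial direction. Conversely, if $\Omega_{f}(t)=0$ for every $t\in\mathbb{R}$, specialising to $t=1$ gives $\Omega_{f}(1)=0$ for every self-adjoint $f$, which is the trivial direction.

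Because the argument reduces to this rescaling observation, there is essentially no obstacle to overcome; the only point one has to be careful about is that the hypothesis in the statement is universally quantified over self-adjoint $f$, and that $tf$ remains self-adjoint precisely because $t\in\mathbb{R}$. If instead one wanted to allow $t\in\mathbb{C}$, the element $tf$ would generally fail to be self-adjoint and the equivalence would break; this is what makes the power-series expansion (\ref{powerseries}) informative rather than forced to vanish identically. No analytic input about the convergence of (\ref{powerseries}) is needed for the equivalence itself, which is purely algebraic.
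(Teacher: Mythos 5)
Your proof is correct and takes essentially the same route as the paper: the paper's one-sentence argument ("the space of all elements $tf$, $f$ self-adjoint and $t\neq 0$ contains all self-adjoint elements") is exactly the rescaling identity $\Omega_{f}(t)=\Omega_{tf}(1)$ combined with the observation that real multiples of self-adjoint elements exhaust the self-adjoint elements, which you simply spell out in more detail (including the correct universally-quantified reading of the statement).
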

\begin{proof}
Since the space of all elements $tf$, $f$ self-adjoint and $t \neq 0$ contains all self-adjoint elements in the noncommutative torus.
\end{proof}
Now we state the main result of this section.
\begin{proposition} \label{FailGBbyPerturbation}
Given any self-adjoint element $f \in A_{\Theta}^{\infty}$ with the condition 
\begin{equation} \label{order4tracecondition}
\tau \big(f^{2}(\partial_{2}f)(\partial_{2}f)-f(\partial_{2}f)f(\partial_{2}f)  \big) \neq 0,
\end{equation}
there is an interval $|t -\epsilon| < \delta$ for any $\epsilon \in \mathbb{R}$, such that for any $tf$, $t \in B_{\delta}(\epsilon)$, the Gauss-Bonnet theorem violates. I.e. $\Omega_{f}(t) \neq 0$. 
\end{proposition}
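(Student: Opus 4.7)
The plan is to view $\Omega_f(t)$ as a function of $t\in\mathbb{R}$ and prove that it is not identically zero. Since each of $e^{\pm tf}$ and $e^{\pm tf/2}$ is an entire $A_\Theta^\infty$-valued function of $t$, and since $\partial_2$, algebra multiplication and $\tau$ are all continuous, $\Omega_f$ extends to an entire function $\mathbb{C}\to\mathbb{C}$. Once it is shown to be non-trivial, its zero set is a discrete subset of $\mathbb{R}$, and the complement contains intervals arbitrarily close to any prescribed $\epsilon\in\mathbb{R}$, which yields the claim of the proposition.

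To exhibit non-triviality, I would compute the low-order Taylor coefficients of $\Omega_f$ at $t=0$ from the expansions $e^{\pm tf}=\sum_k(\pm t)^k f^k/k!$, $e^{\pm tf/2}=\sum_k(\pm t/2)^k f^k/k!$, together with the Leibniz rule $\partial_2(f^k)=\sum_{i=0}^{k-1}f^i(\partial_2 f)f^{k-1-i}$. The $t^0$ and $t^1$ coefficients are trivially zero, since each summand of $\Omega_f$ is $O(t^2)$. At order $t^2$ the two summands contribute $-\tfrac{1}{2}\tau((\partial_2 f)^2)$ and $+\tfrac{1}{2}\tau((\partial_2 f)^2)$, which cancel. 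At order $t^3$, using $\partial_2(f^2)=(\partial_2 f)f+f(\partial_2 f)$ and cyclicity of $\tau$, every contributing trace reduces to $\tau(f(\partial_2 f)^2)$, and the resulting scalar coefficient collapses to zero.

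The crux is the $t^4$ coefficient. Expanding
\[
(\partial_2 e^{-tf/2})(\partial_2 e^{tf}) + (\partial_2 e^{tf/2})(\partial_2 e^{tf})\, e^{-tf}
\]
to fourth order, every surviving trace is a word of length four in the letters $\{f,f,\partial_2 f,\partial_2 f\}$. By cyclicity of $\tau$ each such word reduces to one of exactly two independent values,
\[
\alpha := \tau\bigl(f^2(\partial_2 f)^2\bigr),\qquad \beta := \tau\bigl(f(\partial_2 f)f(\partial_2 f)\bigr).
\]
Careful bookkeeping of the $1/k!$ prefactors from the four exponentials and the positions at which $\partial_2$ acts in each $\partial_2(f^k)$ shows that the $t^4$ coefficient of $\Omega_f(t)$ is a nonzero rational multiple of $\alpha-\beta$. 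By hypothesis \eqref{order4tracecondition}, $\alpha-\beta\neq 0$, so $\Omega_f^{(4)}(0)\neq 0$ and $\Omega_f$ is non-trivial.

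The main obstacle is purely combinatorial: at the $t^4$ level there are eight distinct products coming from the two summands of $\Omega_f$, each of which splits further by Leibniz. Tracking all the prefactors and performing the cyclic reduction to $\{\alpha,\beta\}$ correctly is where care is needed, whereas all the surrounding pieces --- entire-ness of $\Omega_f$, isolation of zeros, cancellation of the $t^2$ and $t^3$ terms --- are routine consequences of analyticity and tracial identities.
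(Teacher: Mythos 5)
Your proposal follows essentially the same route as the paper: expand $\Omega_f$ in powers of $t$, check that the coefficients through order $t^3$ vanish, show the $t^4$ coefficient is a nonzero rational multiple of $\tau\big(f^2(\partial_2 f)(\partial_2 f)\big)-\tau\big(f(\partial_2 f)f(\partial_2 f)\big)$, and conclude by analyticity; the paper's explicit bookkeeping confirms your asserted coefficient, obtaining exactly $3$ times that difference. The one step you assert rather than verify --- that the rational prefactor does not itself collapse to zero the way it does at orders $t^2$ and $t^3$ --- is precisely the crux and is carried out in the paper, while your handling of the analytic side (entirety of $\Omega_f$ and discreteness of its real zeros, giving intervals of failure near any prescribed $\epsilon$) is actually cleaner than the paper's discussion of a ``domain of convergence.''
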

\begin{proof}
Out of the domain of the convergence of the series $\Omega_{f}(t)$ it does not vanish, so the proposition holds. In the domain of convergence, by considering the power series (\ref{powerseries})
$$ \tau \big\{ \big( -t \frac{\partial_{2}f}{2}+ t^{2} \frac{\partial_{2}f^{2}}{4 \times 2!} - t^{3} \frac{\partial_{2}f^{3}}{8 \times 3!}+ ... \big) \big(t \partial_{2} f + t^{2} \frac{\partial_{2}f^{2}}{ 2!} + t^{3} \frac{\partial_{2}f^{3}}{ 3!}+ ... \big)$$

$$+ \big( t \frac{\partial_{2}f}{2}+ t^{2} \frac{\partial_{2}f^{2}}{4 \times 2!}+ t^{3} \frac{\partial_{2}f^{3}}{8 \times 3!}+ ... \big)  \big(t \partial f + t^{2} \frac{\partial_{2}f^{2}}{ 2!} + t^{3} \frac{\partial_{2}f^{3}}{ 3!}+ ...\big) \big(1 - tf + t^{2}\frac{f^{2}}{2!} - t^{3}\frac{f^{3}}{3!} +...  \big) \big\}$$
 that $\Omega_{f}(0)$ $= \Omega_{f}^{'}(0)$ $=\Omega_{f}^{''}(0)$ $ = \Omega_{f}^{'''}(0)= 0$. However about $\Omega_{f}^{(4)}(0)$ we have

\begin{equation} \label{order4trace}
\begin{split}
\Omega_{f}^{(4)}(0) = &  t^{4} \tau \big\{
- \frac{(\partial_{2}f)(\partial_{2}f^{3})}{2 \times 3!}+  \frac{(\partial_{2}f^{2})(\partial_{2}f^{2})}{4 \times 2! \times 2!} -  \frac{(\partial_{2}f^{3})(\partial_{2}f)}{8 \times 3!}
\big\} \\
& +t^{4} \tau \big\{
\frac{(\partial_{2}f)(\partial_{2}f^{3})}{2 \times 3!} +  \frac{(\partial_{2}f^{2})(\partial_{2}f^{2})}{4 \times 2! \times 2!} + \frac{(\partial_{2}f^{3})(\partial_{2}f)}{8 \times 3!}\\
&- \frac{(\partial_{2}f)(\partial_{2}f^{2})f}{2 \times 2!}- \frac{(\partial_{2}f^{2})(\partial_{2}f)f}{4 \times 2!} + \frac{(\partial_{2}f)(\partial_{2}f)f^{2}}{2 \times 2!}
\big\}\\
= & t^{4} \tau \big\{
- \frac{(\partial_{2}f)(\partial_{2}f^{3})}{2 \times 3!}+  \frac{(\partial_{2}f^{2})(\partial_{2}f^{2})}{4 \times 2! \times 2!} -  \frac{(\partial_{2}f^{3})(\partial_{2}f)}{8 \times 3!}
\big\} \\
& +t^{4} \tau \big\{ 
\frac{(\partial_{2}f^{2})(\partial_{2}f^{2})}{2 \times 2! \times 2!}  - \frac{(\partial_{2}f)(\partial_{2}f^{2})f}{2 \times 2!}- \frac{(\partial_{2}f^{2})(\partial_{2}f)f}{4 \times 2!} + \frac{(\partial_{2}f)(\partial_{2}f)f^{2}}{2 \times 2!}
\big\}\\
= & 3 \tau \big( f^{2}(\partial_{2}f)(\partial_{2}f)-f(\partial_{2}f)f(\partial_{2}f)  \big).
\end{split}
\end{equation}

So there is at least an $\epsilon$ in the domain of convergence that $\Omega_{f}(\epsilon) \neq 0$. And since any function is analytic, we induce that there is an interval $|t -\epsilon| < \delta$ in the domain of convergence such that for any $t$ in this interval $\Omega_{f}(t) \neq 0$. This completes the proof.
\end{proof}
The trace in relation (\ref{order4trace}) does not necessarily vanish for arbitrary self-adjoint $f \in A_{\theta}^{\infty}$. For example, the trace of $o(t^{4})$ in \ref{order4trace} for $f= U+ U^{-1} + V + V^{-1}$ is equal to
$$\frac{-6}{8} + \frac{-2e^{i \theta} -2e^{-i \theta}-2}{8}.$$

\subsection{Failing the G-B for projections}
We have shown in Proposition \ref{FailGBbyPerturbation} that for any $f = f^{*} \in A_{\Theta}^{\infty}$ the Gauss-Bonnet theorem does not hold for $tf$, $t$
in some interval provided the condition
\begin{equation} \label{order4tracecondition}
\tau \big(f^{2}(\partial_{2}f)(\partial_{2}f)-f(\partial_{2}f)f(\partial_{2}f)  \big) \neq 0.
\end{equation}
In this part, we give a class of self-adjoint elements that satisfiy the condition \ref{order4tracecondition} and so the Gauss-Bonnet theorem does not hold for them. We consider the projective elements $p= p^{*} = p^{2}$ and show that they satisfy the condition (\ref{order4tracecondition}). For such a $p$ we have 

\begin{equation*}
\begin{split}
\tau \big(f^{2}(\partial_{2}f)(\partial_{2}f)\big) & = \tau \big(f^{2}(\partial_{2}f^{2})(\partial_{2}f)\big) \\
& = \tau \big(f^{2}(\partial_{2}f)(\partial_{2}f)\big) + \tau \big(f(\partial_{2}f)f(\partial_{2}f)\big),
\end{split}
\end{equation*}
so
\begin{equation*}
\tau \big(f(\partial_{2}f)f(\partial_{2}f)\big) =0,
\end{equation*}
which gives

\begin{equation*} 
\begin{split}
\tau \big(f^{2}(\partial_{2}f)(\partial_{2}f)\big) - \tau\big( f(\partial_{2}f)f(\partial_{2}f)\big) & = \tau \big(f^{2}(\partial_{2}f)(\partial_{2}f)\big) \\
 & = \tau \big((\partial_{2}f) f^{2}(\partial_{2}f)\big) \\
&  = \tau \big( \big((\partial_{2}f) f\big) \big((\partial_{2}f)f\big)^{*}\big) \geq 0.
\end{split}
\end{equation*}

Sine the trace $\tau$ is faithful, the last term does not vanish unless we have $(\partial_{2}f)f = 0$. To have $(\partial_{2}f)f =- (\partial_{2}f)^{*}f =0$, it requires its constant term
$$\tau \big( \Sigma_{mn} n \overline{a_{mn}}V^{-n}U^{-m} \Sigma_{mn} a_{mn} U^{m} V^{n} \big) = \Sigma_{mn} n |a_{mn}|^{2}  $$ 
should be zero. This does not hold unless we have $p = \Sigma_{m} a_{m} U^{m}$ for which we know the G-B theorem holds since it commutes with its partial derivative. We summarize the above result as follows

\begin{proposition} \label{GBFailure}
For any projection $p \in A_{\Theta}^{\infty}$ which does not commute with its partial derivatives, the Gauss-Bonnet theorem does not hold for $tp$, for $t$ in some interval as given in Proposition \ref{FailGBbyPerturbation}. 
\end{proposition}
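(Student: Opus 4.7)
The plan is to invoke Proposition \ref{FailGBbyPerturbation} and reduce the claim to verifying the trace inequality (\ref{order4tracecondition}) for any projection $p$ not commuting with its partial derivatives. The paragraph preceding the proposition already carries out most of the algebra; my proof would organize it into a clean implication chain.

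First, from $p^2 = p$ I would derive the block identity $p(\partial_2 p) p = 0$ by writing $p(\partial_2 p) = \partial_2 p - (\partial_2 p) p$ (Leibniz) and post-multiplying by $p$. This immediately gives $\tau(p(\partial_2 p) p(\partial_2 p)) = 0$, so the left-hand side of (\ref{order4tracecondition}) reduces to $\tau(p^2(\partial_2 p)(\partial_2 p)) = \tau(p(\partial_2 p)(\partial_2 p))$. Tracial cyclicity together with $p^* = p$ and the adjoint rule for partial derivatives rewrite this as
\begin{equation*}
\tau\bigl((\partial_2 p) p \cdot ((\partial_2 p) p)^*\bigr) \geq 0,
\end{equation*}
which, by faithfulness of $\tau$, is strictly positive iff $(\partial_2 p) p \neq 0$.

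Second, and this is the conceptual core, I would rule out $(\partial_2 p) p = 0$ under the hypothesis. Taking the adjoint of this equation and invoking $p^* = p$ yields $p(\partial_2 p) = 0$; feeding both identities back into the Leibniz relation forces $\partial_2 p = 0$. But then $p$ has only $U^m$ Fourier modes, so $p = \sum_m a_m U^m$ lies in the commutative subalgebra generated by $U$; in particular $p$ commutes with both $\partial_1 p$ and $\partial_2 p = 0$, contradicting the hypothesis. The contrapositive gives $(\partial_2 p) p \neq 0$, so the trace in (\ref{order4tracecondition}) is strictly positive, and Proposition \ref{FailGBbyPerturbation} delivers the desired failure interval.

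The main subtlety is bridging the hypothesis, which a priori only asserts $[\partial_j p, p] \neq 0$ for some $j \in \{1,2\}$, with the perturbation analysis, which uses only the $\partial_2$ direction of the metric $\operatorname{diag}(e^f, 1)$. The observation in the previous paragraph, namely that $\partial_2 p = 0$ already forces $p$ to commute with both partial derivatives, is exactly what closes this gap at no extra cost.
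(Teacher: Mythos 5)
Your proof is correct, and at the decisive step it takes a different (and in fact more solid) route than the paper. Both arguments reduce the claim to verifying condition (\ref{order4tracecondition}), and both first kill the cross term: you get $\tau(p(\partial_2 p)p(\partial_2 p))=0$ from the identity $p(\partial_2 p)p=0$, while the paper gets it by substituting $\partial_2 p=\partial_2(p^2)$ inside the trace; these are equivalent. The divergence is in ruling out $(\partial_2 p)p=0$. The paper does this by a Fourier-coefficient computation, asserting that the ``constant term'' $\sum_{m,n} n|a_{mn}|^2$ must vanish and that this forces $p=\sum_m a_m U^m$; but for a self-adjoint element one has $|a_{mn}|=|a_{-m,-n}|$, so that sum vanishes identically and yields no constraint --- the paper's step as written does not establish the implication. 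Your argument ($(\partial_2 p)p=0$ plus the adjoint relation gives $p(\partial_2 p)=0$, and Leibniz then forces $\partial_2 p=0$, hence $p\in C^*(U)$ commutes with both $\partial_1 p$ and $\partial_2 p$) is elementary, closes the $\partial_1$-versus-$\partial_2$ gap that the paper glosses over, and actually repairs the proof. One cosmetic point, shared with the paper: with the convention $(\partial_i a)^*=-\partial_i a^*$ one has $\bigl((\partial_2 p)p\bigr)^*=-p(\partial_2 p)$, so $\tau\bigl(p(\partial_2 p)(\partial_2 p)\bigr)=-\tau\bigl((\partial_2 p)p\,((\partial_2 p)p)^*\bigr)\leq 0$ rather than $\geq 0$; the sign is immaterial, since all that is used is that the trace is nonzero precisely when $(\partial_2 p)p\neq 0$ by faithfulness.
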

\begin{proof}
Given such a projection, the above computations shows that it satisfies the condition \ref{order4tracecondition} which by relation $(\ref{GBtau2})$ means that the Gauss-Bonnet term is non-zero.
\end{proof}
\begin{example}{Power-Riffel Projections}
\\
Consider the noncommutative torus as a cross-product algebra of $C(S^{1})$ by irrational rotations. Consider the noncommutative torus  $A_{\Theta}$ generated by $U$ and $V$ where $U, V: L^{2}(S^{1}) \longrightarrow L^{2}(S^{1})$ are unitary operators given by 
$$Uf(x) = e^{2 \pi i x}f(x), \hspace{1 cm} Vf(x) = f(x+ \theta).$$ 
A Powers-Rieffel projection is of the form 
$$ e = f_{-1}(U) V^{-1} + f_{0} + f_{1}(U) V.$$
where coefficient of $V$ for these projections is arbitrary. So according to the statement above, they satisfy in the condition of Proposition \ref{GBFailure} and the G-B fails about them for any $te$, $t$ in an interval.  
\end{example}
\footnotesize


\begin{thebibliography}{1}
\bibitem{Maj1}
E. J. Beggs and S. Majid. $*$-compatible connections in noncommutative Riemannian
geometry. J. Geom. Phys., 61(1), 95–124, 2011.
\bibitem{Mar1}
T. A. Bhuyain, M. Marcolli, The Ricci flow on noncommutative two-tori, Lett. Math. Phys.
101 (2012), no. 2, 173-194.
\bibitem{Bra1}
O. Bratteli, G. A. Elliott, P.E.T. Jorgensen, Decomposition of unbounded derivations into invariant and approximately inner parts, J. Reine Angew. Math. 346 (1984), 166-193.
\bibitem {Con1}
A. Connes, $C^{*}$-algebres et geometrie differentielle, C. R. Acad. Sci. Paris 
Ser. {\bf A-B} 290 (1980), A599-A604, hep-th/0101093.
\bibitem{Con2}
A. Connes, H. Moscovici, Modular curvature for noncommutative two-tori, J. Amer. Math. Soc. 27(2014), no. 3, 639-684.
\bibitem{Con3}
A. Connes, P. Tretkoff. The Gauss-Bonnet theorem for the noncommutative
two torus. In Noncommutative geometry, arithmetic, and related topics, p.
141–158. Johns Hopkins Univ. Press, Baltimore, MD, 2011.
\bibitem{Con4}
A. Connes, On the Spectral characterization of manifolds, J. Noncommut. Geom. 7(2013), 1-82.
\bibitem{Con5}
A. Connes, Noncommutative geometry and reality, J. Math. Phys., 36(1995), 6194-6231.
\bibitem{Dab1}
L. Dabrowski, A. Sitarz, Asymmetric noncommutative torus, SIGMA 11 (2015), 075.
\bibitem{Fat1}
F. Fathizadeh, On the Scalar Curvature for the Nonmmutative Four Torus, J. Math. Phys., 56(6): 062303, 2015.
\bibitem{Mas2}
F. Fathizadeh, M. Khalkhali, Scalar curvature for the noncommutative two torus, J. Noncommut. Geom. 7(2013), 1145-1183.
\bibitem{Mas3}
F. Fathizadeh, M. Khalkhali, Scalar curvature for noncommutative four-tori, J. Noncommut. Geom.  9(2015), 473-503.
\bibitem{Mas1}
F. Fathizadeh , M. Khalkhali, The Gauss-Bonnet theorem for noncommutative two tori with a general
conformal structure, J. Noncommut. Geom. 6 (2012), 457-480.
\bibitem{Mas4}
F. Fathizadeh, M. Khalkhali, Weyl’s law and Connes’ trace theorem for noncommutative two tori, Lett. in Math. Phys. 103(2013), no. 1, 1-18.
\bibitem{Gil1}
P. B. Gilkey, Invariance theory, the heat equation, and the Atiyah-Singer index theorem, Publish or Perish Inc., 1984. 
\bibitem{Pet1}
M. A. Peterka, A. J. L. Sheu, On Noncommutative Levi-Civita Connections, Int. J. Geom. Meth. Mod. Phys. 14 (2017) no. 05, 1750071.
\bibitem{Ros1}
J. Rosenberg, Levi-Civita's theorem for noncommutative tori, SIGMA 9(2013), 071, 9 pages.
\bibitem{Ros2} 
J. Rosenberg, Noncommutative variations on the Laplace's equation, Anal. PDE 1 (2008), no. 1, 95-114.
\bibitem{Sit1}
A. Sitarz, Conformally rescaled noncommutative geometries, Geometric Methods in Physics, Trends in Mathematics, Springer 2015.
\bibitem{Sit2}
A. Sitarz, Wodzicki residue and minimal operators on a noncommutative 4-dimensional torus,  Journal of Pseudo-Differential Operators and Applications, Volume 5, Issue 3, pp 305-317 (2014).
\end{thebibliography}
\end{document}